\begin{document}
\title{Deformations of Reducible Galois Representations to Hida-families}
\author[Anwesh Ray]{Anwesh Ray}
\address{Department of Mathematics \\ University of British Columbia \\
  Vancouver BC, V6T 1Z2, Canada.} 
  \email{anweshray@math.ubc.ca}

\begin{abstract}The global deformation theory of residually reducible Galois representations with fixed auxiliary conditions is studied. We show that $\bar{\rho}:\op{Gal}(\bar{\Q}/\Q)\rightarrow \op{GL}_2(\bar{\F}_p)$ lifts to a Hida line for which the weights range over a congruence class modulo-$p^2$. The advantage of the purely Galois theoretic approach is that it allows us to construct $p$-adic families of Galois representations lifting the actual representation $\bar{\rho}$, and not just the semisimplification.

\end{abstract}

\maketitle

\section{Introduction}
Let $\F_q$ be the finite field of characteristic $p>2$ and denote by $\mathcal{O}$ the ring of Witt vectors $\op{W}(\F_q)$. Fix a continuous Galois representation $\bar{\rho}:\op{Gal}(\bar{\Q}/\Q)\rightarrow \GL_2(\F_q)$. In this paper, we consider the problem of describing various families of $p$-adic representations which lift $\bar{\rho}$. These representations will be subject to certain local conditions at the primes at which they are allowed to ramify.
\par We first discuss the case when $\bar{\rho}$ is absolutely irreducible. Letting $S$ be a finite set of primes containing $p$ and the primes at which $\bar{\rho}$ is ramified, set $\operatorname{G}_{\Q,S}:=\operatorname{Gal}(\Q_S/\Q)$, where $\Q_S$ is the maximal algebraic extension of $\Q$ such that all primes $v\notin S$ are unramified. At each prime $v\notin S$, set $\op{G}_v:=\op{Gal}(\bar{\Q}_v/\Q_v)$ and fix an embedding $\bar{\Q}\hookrightarrow \bar{\Q}_v$. The functor of deformations of $\bar{\rho}$ was introduced by Mazur \cite{mazur}, following Hida \cite{hidagl2}. Mazur shows that there is a \textit{universal deformation ring} $\op{R}(\bar{\rho})$ and a \textit{universal deformation} $\rho^{\op{univ}}: \G_{\Q,S}\rightarrow \GL_2(\op{R}(\bar{\rho}))$ which represents the functor of deformations of $\bar{\rho}$ that are unramified away from the set $S$. Boston and Mazur in \cite{boston1},\cite{boston} and \cite{bostonmazur} calculate $\op{R}(\bar{\rho})$ in many cases of natural interest. In \cite{mazurinfinitefern}, Mazur studies the structure of an "infinite fern" in the universal deformation space of Galois representations. The study of deformation functors has to some extent been motivated by Serre's conjecture. When $\op{det}\bar{\rho}$ is odd, Serre's conjecture asserts that $\bar{\rho}$ lifts to a characteristic zero representation $\rho$ attached to a Hecke eigencuspform. The conjecture was settled by Khare-Wintenberger in \cite{KW2}. Prior to this development, Ramakrishna in \cite{RamLGR} and \cite{RamakrishnaFM} showed via purely Galois theoretic means that if $\bar{\rho}$ satisfies some favorable conditions, it exhibits a characteristic zero lift which is \textit{geometric} in the sense of Fontaine-Mazur \cite[p. 193]{fontainemazur}. The construction involves a purely Galois theoretic argument, involving the study of global deformations satisfying fixed local deformation conditions at the primes $S$ and a finite auxiliary set of primes $X$. The auxiliary primes in $X$ are called \textit{nice} primes and the deformation functor is of Steinberg type \cite[section 4.2]{PatEx}. When $\bar{\rho}$ is \textit{ordinary}, i.e, $\bar{\rho}( \op{G}_p)$ is contained in $\mtx{\ast}{\ast}{0}{\ast}\subset \op{GL}_2(\F_q)$, the deformation condition at $p$ consists of ordinary deformations. The functor of global Galois deformations unramified outside $S\cup X$ with fixed local conditions is represented by a \textit{universal deformation ring} $\mathcal{R}_{S\cup X}$. Denote by \[\rho_{S\cup X}^{\op{univ}}:\op{G}_{\Q,S\cup X}\rightarrow \GL_2(\mathcal{R}_{S\cup X})\] the universal deformation (where the determinant character is not fixed). Let $\Q^{\op{cyc}}$ denote the cyclotomic $\Z_p$-extension of $\Q$ and set $\Gamma:=\Gal(\Q^{\op{cyc}}/\Q)$. The Iwasawa-algebra $\Lambda$ is the completed group algebra $\Lambda:=\mathcal{O}[[\Gamma]]$. Fixing a topological generator $\gamma\in \Gamma$ amounts to fixing an isomorphism of $\mathcal{O}$-algebras $\Lambda\simeq \mathcal{O}[[x]]$, where $\gamma-1$ is the variable $x$. Refer to $\op{Spec}\Lambda$ as \textit{weight-space} and the map to weight-space $\mathcal{W}t:\op{Spec} \mathcal{R}_{S\cup X}\rightarrow \op{Spec}\Lambda$ is induced by mapping $x$ to $\det\rho_{S\cup X}^{\op{univ}}(\gamma)-1$. For every fixed lift $\kappa:\op{G}_{\Q,S}\rightarrow \GL_1(\mathcal{O})$ of $\det\bar{\rho}$, let $\mathcal{R}_{S\cup X}^{\kappa}$ denote the \textit{fixed weight} quotient of $\mathcal{R}_{S\cup X}$, parametrizing deformations $\rho$ for which $\det \rho=\kappa$. Ramakrishna shows that $X$ may be chosen so that $\mathcal{R}_{S\cup X}^{\kappa}\simeq \mathcal{O}$ for each $\kappa$, cf. \cite[Theorem 1]{RamakrishnaFM}. It follows that there is a canonical isomorphism $\mathcal{R}_{S\cup X}\xrightarrow{\sim} \mathcal{O}[[x]]$ induced by the map to weight-space. This produces a nice family of ordinary Galois representations, one for each $p$-adic weight. Such a family of $p$-adic Galois representations is represented by a $p$-adic line which is a component in a certain $p$-ordinary local Hecke algebra of specified level (depending on the set of primes $S\cup X$) and arbitrary weight. We refer to such a component as a \textit{Hida-line}. Presentations of deformation rings with prescribed local conditions are studied by B\"ockle in \cite[section 5]{Bockle}. It is necessary that the local deformation conditions be representable. In this setting, it becomes possible to provide a description for generators and relations in terms of certain Galois cohomology groups with local conditions. The local conditions are associated with the tangent spaces to the various local deformation functors which are to be interpreted as \textit{Selmer conditions}. Even in cases when the structure of the minimal deformation ring $\mathcal{R}_S$ may be complicated, allowing for additional ramification and imposing local conditions gives a smooth ring $\mathcal{O}[[x]]$. The existence of such families of Galois representations are of genuine interest from the point of view of studying the structure of universal deformation rings.
\par The goal of this paper is to study the analogous deformation problem with prescribed local conditions in the setting when $\bar{\rho}$ is reducible, yet indecomposable. The reducible case presents some technical difficulties, which we explain. Hamblen and Ramakrishna in \cite{hamblenramakrishna} show that under some favorable conditions on
\[\bar{\rho}=\mtx{\varphi}{\ast}{0}{1}:\operatorname{G}_{\Q,S}\rightarrow \op{GL}_2(\F_q),\] there is a finite set of auxiliary primes $X$ and a $p$-ordinary lift $\rho:\op{G}_{\Q,S\cup X}\rightarrow \GL_2(\mathcal{O})$ which satisfies certain deformation problems at the primes $v\in S\cup X$. By the result of Skinner and Wiles in \cite{skinnerwiles}, the representation $\rho$ arises from a $p$-ordinary Hecke eigencuspform. The primes $v\in X$ are called \textit{trivial primes} since the restriction $\bar{\rho}_{\restriction \op{G}_{v}}$ is the trivial representation \cite[section 4]{hamblenramakrishna}. Each trivial prime $v$ is equipped with a deformation functor $\tilde{\mathcal{C}}_v$ (see \cite[Definition 26 and 30]{hamblenramakrishna} for the definition in the setting when the determinant of the lifts is fixed). The deformation functor at $v$ is \textit{not} representable, as we explain. Let $\op{Ad}\bar{\rho}$ be the $\F_q$-space of $2\times 2$-matrices with adjoint Galois action. For $g\in \op{Gal}(\bar{\Q}/\Q)$ and $v\in \op{Ad}\bar{\rho}$, we have that $g\cdot v:=\bar{\rho}(g)v \bar{\rho}(g)^{-1}$. If a functor of deformations $\mathcal{F}$ of $\bar{\rho}_{\restriction \op{G}_{v}}$ is representable, it comes equipped with a \textit{tangent space} $\mathcal{N}_{\mathcal{F}}:=\mathcal{F}(\F_q[\epsilon]/(\epsilon^2))$, which may be identified with a subspace of $H^1(\op{G}_{v},\op{Ad}\bar{\rho})$. For $\varrho\in \mathcal{F}(\mathcal{O}/p^N)$ and $f\in \mathcal{N}_{\mathcal{F}}$, \[(\op{Id}+f p^{N-1})\varrho\in \mathcal{F}(\mathcal{O}/p^N).\] In other words, the tangent-space must stabilize the functor of deformations. This is not the case for the deformation problem at a trivial prime $v$. Associated to the deformation functor $\tilde{\mathcal{C}}_v$ at $v$, there is a space of cohomology classes which plays the role of a tangent space for mod $p^3$ lifts of $\bar{\rho}_{\restriction \op{G}_{v}}$. These stabilize $\tilde{\mathcal{C}}_v(\mathcal{O}/p^N)$ for $N\geq 3$, but not for $N\leq 2$ (cf. \cite[Corollary 25]{hamblenramakrishna} and Proposition $\ref{furtherdetailed}$). It has thus not been possible to formulate what the analogous deformation rings should be in the reducible setting (when local conditions at trivial primes are taken into account) and whether such deformation rings are well-defined. For each choice of lift $\kappa$ of $\det\bar{\rho}$, the method of Hamblen-Ramakrishna relies on a choice of a mod-$p^2$ lift $\rho_2$ of $\bar{\rho}$ (cf. \cite[section 5]{hamblenramakrishna}) which depends on $\kappa$. The lift $\rho_2$ is ramified at two trivial primes $\{v_1,v_2\}$ (in addition to $S$) which depend on the choice of $\kappa$ (cf. \cite[Theorem 41]{hamblenramakrishna}). Thus, when local conditions are prescribed, one does not expect all the lifts obtained from the method Hamblen-Ramakrishna to lie on the same component in a single Hida family, let alone a single Hida-line.
\par We examine an adaptation of the functor of global deformations, with two crucial differences:
\begin{enumerate}
    \item We restrict the functor to a suitable subcategory $\mathfrak{C}$ of the category of coefficient rings.
    \item Second, we fix a mod $p^2$ lift $\rho_2$ of $\bar{\rho}$ and study the functor of deformations of $\rho_2$ (restricted to $\mathfrak{C}$) with prescribed local conditions.
\end{enumerate} 
A coefficient ring $R$ over $\mathcal{O}$ is a Noetherian local $\mathcal{O}$-algebra $R$ with maximal ideal $\mathfrak{m}_R$ and residue field isomorphic to $\F_q$. Maps between coefficient rings are $\mathcal{O}$-algebra maps of local rings. The category $\mathfrak{C}$ consists of finite-length coefficient rings for which $p\notin \mathfrak{m}_R^2$. Suitable local deformation conditions $\tilde{\mathcal{C}}_v$ at each prime $v\in S$ are chosen. These deformation conditions will be of Steinberg-type for $v\in S\backslash \{p\}$ and $\tilde{\mathcal{C}}_p$ will consist of ordinary deformations. By the construction of Hamblen-Ramakrishna, there is a suitable choice of auxiliary primes $X$ disjoint from $S$ which allow for a characteristic zero lift
\[\rho:\op{G}_{\Q,S\cup X}\rightarrow \GL_2(\mathcal{O})\]which satisfies the following conditions:
\begin{enumerate}
    \item $\rho$ satisfies the conditions $\tilde{\mathcal{C}}_v$ for $v\in S\cup X$,
    \item $\rho$ arises from a Hecke eigencuspform.
\end{enumerate} Set
$\Phi$ to denote the tuple of deformation conditions $(\tilde{\mathcal{C}}_v)_{v\in S\cup X}$ and denote by $\rho_2:\G_{\Q,S\cup X}\rightarrow \GL_2(\mathcal{O}/p^2)$ the mod $p^2$ reduction of $\rho$. The functor of deformations we consider depends on $\Phi$ and the choice of $\rho_2$. For $R\in \mathfrak{C}$, let $\rho_{2,R}$ denote the deformation with image in $\GL_2(R/(pR\cap \mathfrak{m}_R^2))$ induced from $\rho_2$ by the structure map $\mathcal{O}\rightarrow R$. Let \[\Df:\mathfrak{C}\rightarrow \operatorname{Sets}\] be the functor such that $\Df(R)$ consists of deformations $\rho_R:\G_{\Q,S\cup X}\rightarrow \GL_2(R)
$ for which
\[\rho_{2,R}= \rho_R\mod{(pR\cap \mathfrak{m}_R^2)}.\]
The conditions $\eqref{c1ofmain}$ to $\eqref{c10ofmain}$ below are those imposed on $\bar{\rho}$ by the method of Hamblen-Ramakrishna.
\begin{Th}\label{main}
Let $S$ be a finite set of primes containing $p$ and $\bar{\rho}:\G_{\Q,S}\rightarrow \op{GL}_2(\F_q)$ be a two-dimensional Galois representation given by $\bar{\rho}=\mtx{\varphi}{\ast}{0}{1}$. Let $c$ denote complex-conjugation. Suppose further that 
\begin{enumerate}
\item\label{c1ofmain} the characteristic $p\geq 3$,
\item the representation $\bar{\rho}$ is indecomposable,
\item $\bar{\rho}$ is odd, i.e. $det\bar{\rho}(c)=-1$, where $c$ denotes complex conjugation,
\item\label{c8ofmain}  the character $\varphi_{\restriction I_p}=\chi^{k-1}_{\restriction I_p}$ where $2\leq k\leq p-1$,
\item\label{c9ofmain}
$\varphi_{\restriction \G_p}\notin\{ \bar{\chi}_{\restriction \G_p}, \bar{\chi}^{-1}_{\restriction \G_p},1\}$,
\item\label{c10ofmain} the $\F_p$-span of the image of $\varphi$ is (the entirety of) $\F_q$.
\end{enumerate}

There exists a deformation $\tilde{\varrho}$ as depicted:
 \[\begin{tikzpicture}[node distance = 2.0cm, auto]
      \node (GSX) {$\G_{\Q,S\cup X}$};
      \node (GS) [right of=GSX] {$\G_{\Q,S}$};
      \node (GL2) [right of=GS]{$\op{GL}_2(\F_q).$};
      \node (GL2W) [above of= GL2]{$\op{GL}_2(\mathcal{O}[[U]])$};
      \draw[->] (GSX) to node {} (GS);
      \draw[->] (GS) to node {$\bar{\rho}$} (GL2);
      \draw[->] (GL2W) to node {} (GL2);
      \draw[dashed,->] (GSX) to node {$\tilde{\varrho}$} (GL2W);
      \end{tikzpicture}\] such that for $R\in \mathfrak{C}$, the induced map
\begin{equation}\label{surjective}\tilde{\varrho}^*:\Hom(\mathcal{O}[[U]],R)\rightarrow \D(R)\end{equation} is surjective. Furthermore, if $R$ contains no $p$-torsion, the above map $\eqref{surjective}$ is an isomorphism.
\end{Th}
The lift $\tilde{\varrho}$ does not represent the functor $\D$, however, the condition on the surjectivity of the map $\eqref{surjective}$ shows that all deformations satisfying $\D$ may be recovered from $\tilde{\varrho}$. For instance, all modular lifts of $\bar{\rho}$ satisfying $\D$ are characteristic zero points on the Hida line defined by $\tilde{\varrho}$. It is easy to see that $\D(\F_q[\epsilon]/(\epsilon^2))=\{\bar{\rho}\}$ and hence the map $\eqref{surjective}$ is not an isomorphism (for $R=\F_q[\epsilon]/(\epsilon^2)$). The assertion that $\eqref{surjective}$ is an isomorphism when $R$ has no $p$-torsion indicates that $\tilde{\varrho}$ has many of the characteristic properties of a universal deformation.
\par Next, we describe the map to weight-space for the above Galois representation. Associate to a Galois representation $\rho_R:\G_{\Q}\rightarrow \GL_2(R)$, the \textit{weight}, which is a point on weight-space $\op{Spec}\Lambda$,
\[\mathscr{W}t(\rho_R):\Spec R\rightarrow \Spec \Lambda\] induced by the homomorphism of rings mapping $x$ to $\det\rho_R(\gamma)-1$.
\begin{Th}\label{aux}
Let $R$ be a coefficient ring in $\mathfrak{C}$ with maximal ideal $\mathfrak{m}_R$. In the context of Theorem $\ref{main}$, the functor of points of the map to weight space induces on $R$-points a map \[\mathscr{W}t^*:\D(R)\rightarrow \Hom_{\mathfrak{C}}(\Lambda,R)\]whose image consists weights in the mod $(pR)\cap \mathfrak{m}_R^2$ congruence class of weights which coincide with the weight of the chosen lift $\rho_2$ modulo $(pR)\cap \mathfrak{m}_R^2$.
\end{Th}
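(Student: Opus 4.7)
The plan is to establish both directions of the claim: that every weight in the image $\mathscr{W}t^*(\D(R))$ lies in the prescribed congruence class, and conversely that every weight in the class is realised.

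For the easy containment, I would observe that by the very definition of $\D$, every $\rho_R \in \D(R)$ satisfies $\rho_R \equiv \rho_{2,R} \pmod{I}$ where $I:=(pR)\cap \mathfrak{m}_R^2$. Since the determinant is polynomial in the matrix entries, this congruence passes to determinants: $\det\rho_R \equiv \det\rho_{2,R} \pmod{I}$ as characters $\G_{\Q,S\cup X}\to (R/I)^{\times}$. Evaluating at the topological generator $\gamma$ and subtracting $1$, the weights $\mathscr{W}t^*(\rho_R)$ and $\mathscr{W}t^*(\rho_{2,R})$ agree modulo $I$.

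For the converse, I would invoke Theorem \ref{main} to produce the hull $\mathcal{O}[[U]]$ and the versal $\tilde{\varrho} \in \D(\mathcal{O}[[U]])$. Any $\rho_R \in \D(R)$ is the pushforward of $\tilde{\varrho}$ along some local $\mathcal{O}$-algebra homomorphism $\phi\colon \mathcal{O}[[U]] \to R$, parameterised by $u := \phi(U) \in \mathfrak{m}_R$, whence $\mathscr{W}t^*(\rho_R)(T) = \phi(\det\tilde{\varrho}(\gamma) - 1)$. The problem thus reduces to understanding the single power series $\det\tilde{\varrho}(\gamma)-1 \in \mathcal{O}[[U]]$. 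Applying the easy direction with $R=\mathcal{O}[[U]]$ shows it is congruent to $\det\rho_2(\gamma)-1$ modulo $p\mathcal{O}[[U]]\cap (p,U)^2 = (p^2, pU)$, so I can write
$$ \det\tilde{\varrho}(\gamma) - 1 \;=\; a + p^2 \alpha(U) + p U \beta(U) $$
for some $a \in \mathcal{O}$ lifting $\det\rho_2(\gamma)-1 \in \mathcal{O}/p^2$ and some $\alpha,\beta \in \mathcal{O}[[U]]$.

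The main obstacle I anticipate is verifying that $\beta(0) \in \mathcal{O}^{\times}$, i.e., that the deformation parameter $U$ moves the determinant nontrivially. I expect this to fall out of the proof of Theorem \ref{main}: the free variable $U$ of the hull is introduced to parameterise precisely the cyclotomic (weight) direction in the tangent space of $\D$ at $\rho_2$, so the tangent class associated to $U$ must pair nontrivially with the trace map $\Ad\bar{\rho}\to \F_q$ restricted to $\Gamma$. Once $\beta \in \mathcal{O}[[U]]^{\times}$ is in hand, the remaining step is elementary: for $R \in \mathfrak{C}$ one has $I_R = p\mathfrak{m}_R$ (since $p\notin \mathfrak{m}_R^2$ forces $p\mathfrak{m}_R \subseteq pR\cap \mathfrak{m}_R^2$, and the reverse inclusion is a short check modulo $\mathfrak{m}_R^2$), and for any target $w \in a_R + p\mathfrak{m}_R$ the equation $p^2\alpha_R(u) + pu\beta_R(u) = w - a_R$ can be solved for $u \in \mathfrak{m}_R$ by Hensel-style successive approximation in the Artinian ring $R$, producing a deformation realising the prescribed weight.
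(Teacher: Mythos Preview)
Your easy containment matches the paper. For the converse, your route diverges from the paper's and leaves a gap you have flagged but not closed.

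The paper does not pass through the hull for this direction at all. Given a target weight $\lambda$ in the prescribed congruence class, it fixes the determinant to equal $\lambda$ and lifts $\rho_2$ successively through the filtration $\{\mathfrak{n}_k(R)\}$, using only that the \emph{fixed-weight} dual Selmer group $H^1_{\mathcal{N}^\perp}(\G_{\Q,S\cup X}, \Ad^0\bar{\rho}^*)$ vanishes (which was arranged in the choice of $X$). No analysis of how the hull parameter $U$ interacts with the weight is needed.

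Your route is viable, but the assertion $\beta(0)\in\mathcal{O}^\times$ does not fall out of the construction in Theorem~\ref{main} in the way you describe. There $g$ is simply an arbitrary generator of the one-dimensional space $H^1_{\tilde{\mathcal{N}}}(\G_{\Q,S\cup X}, \Ad\bar{\rho})$; nothing in the construction itself ties it to the cyclotomic direction. Unwinding your expansion gives $\beta(0)\bmod p = \op{tr}(g)(\gamma)$, and showing this is nonzero takes two steps you have not supplied. First, $\op{tr}(g)\neq 0$: otherwise $g$ lies in $H^1(\G_{\Q,S\cup X},\Ad^0\bar{\rho})$ and, since $\tilde{\mathcal{N}}_v\cap H^1(\G_v,\Ad^0\bar{\rho})=\mathcal{N}_v$ at every $v\in S\cup X$, one would get $g\in H^1_{\mathcal{N}}(\G_{\Q,S\cup X},\Ad^0\bar{\rho})=0$. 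Second, for each $v\in (S\cup X)\setminus\{p\}$ the trace of $\tilde{\mathcal{N}}_v$ lands in $H^1_{\operatorname{nr}}(\G_v,\F_q)$ (by Fact~\ref{tildeNdef} and the definition of $\tilde{\mathcal{N}}_v$ at trivial primes as unramified central twists), so $\op{tr}(g)$ is a nonzero homomorphism $\G_{\Q}\to\F_q$ unramified outside $p$; by Kronecker--Weber it factors through $\Gamma$ and hence cannot vanish on the topological generator $\gamma$. With this in hand your Hensel step and the identification $(pR)\cap\mathfrak{m}_R^2=p\mathfrak{m}_R$ go through. So your argument can be completed, but the missing ingredient is a class-field-theoretic constraint on the Selmer class, not a feature of the hull construction.
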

\begin{Remark}
Theorem $\ref{aux}$ implies in particular that on $\mathcal{O}$-points, the image of map 
\[\mathscr{W}t^*:\D(\mathcal{O})\rightarrow \Hom_{\mathfrak{C}}(\Lambda,\mathcal{O})\] is a congruence class of weights modulo $p^2$.
\end{Remark}
\par It should be noted here that an alternative approach to studying the deformation theory of residually reducible Galois representations is via pseudo-representations. The study of pseudorepresentations also aids in proving modularity of a characteristic zero geometric lift of $\bar{\rho}$ when it is known to exist (see for instance \cite{skinnerwiles}). In this paper, we study the deformations of the actual representation $\bar{\rho}$ and not just its semisimplification. In practice, examples of reducible mod-$p$ representations $\bar{\rho}$ are constructed via class field theory. A simple application of inflation-restriction shows that a Galois stable line in a mod-$p$ ray class group of $\Q(\mu_p)$ gives rise to a residually reducible Galois representation $\bar{\rho}$ (see \cite{rayconstructing}). Depending on the set $S$, there may exist many (non-isomorphic) representations $\bar{\rho}$ with the same semisimplification. Every one of the representations $\bar{\rho}$ gives rise to a unique Hida-line. This is the advantage of the purely Galois theoretic approach used in this manuscript.
\par The paper is organized as follows: in section $\ref{section2}$, local deformation conditions at the primes $v\in S$ are discussed. Dimensions of local tangent spaces are calculated and Selmer groups associated to these local deformation conditions are defined. Section $\ref{highlyversal}$ introduces the special role played by \textit{trivial} primes, i.e. the auxiliary primes at which deformations are allowed to further ramify. In section $\ref{section4}$ the main results of this paper are proved.
\subsection*{Acknowledgements}
The author is very grateful to his advisor Ravi Ramakrishna for introducing him to the fascinating subject of Galois deformations. He is very thankful to the anonymous referee, for helpful suggestions which have led to significant improvement in the quality of exposition and presentation of the mathematical content in this paper.
\section{The Setup}\label{section2}
In this section we review some preliminaries. We study global deformation conditions with local constraints. The reader may refer to \cite{Mazurintro} for an introduction to the Galois deformation theory. In the residually irreducible case, presentations of Galois deformation rings with local constraints are discussed in detail in \cite{Bockle}.
\subsection{Deformation conditions and tangent spaces}
\par Suppose that $\bar{\rho}: \G_{\Q,S}\rightarrow \text{GL}_2(\F_q)$ satisfies the conditions of Theorem $\ref{main}$. Recall that $k$ is chosen such that\[\bar{\rho}_{\restriction I_p}=\mtx{\chi^{k-1}}{\ast}{0}{1}.\]By assumption, $2\leq  k\leq  p-1$. In this section, we describe some of the local deformation conditions at primes $v\in S$. Denote by $\mathcal{C}_{\mathcal{O}}$ (resp. $\mathcal{C}_{\mathcal{O}}^f$) the category of noetherian (resp. finite length) coefficient-rings over $\mathcal{O}$. Let $R$ be a coefficient ring with residue map $R\rightarrow \F_q$. Set $\widehat{\op{GL}}_2(R)$ to denote the kernel of the map
\[\op{GL}_2(R)\rightarrow \op{GL}_2(\F_q).\]We recall the definition of a Galois deformation.
\begin{Def}\label{stricteq} Let $R\in \mathcal{C}_{\mathcal{O}}$, $\Pi$ denote $\op{G}_{\Q}$ (resp. $\op{G}_{v}$) and $\bar{r}$ denote $\bar{\rho}$ (resp. $\bar{\rho}_{\restriction \op{G}_{v}}$). Two lifts $\rho,\rho':\Pi\rightarrow \op{GL}_2(R)$ of $\bar{r}$ are \textit{strictly-equivalent} if $\rho=A\rho' A^{-1}$ for $A\in \widehat{\op{GL}}_2(R)$. A \textit{deformation} is a strict equivalence class of lifts.
      \end{Def}
      
      Recall that $\mathfrak{C}\subset \mathcal{C}_{\mathcal{O}}^f$ is the full subcategory of $R$ such that the square of the maximal ideal $\mathfrak{m}_R$ does not contain $p$.

\begin{Def}Let $f:R\rightarrow S$ be a surjective map in $\mathcal{C}_{\mathcal{O}}^f$. The map $f$ is said to be a \textit{small extension} if 
\begin{itemize}
\item the maximal ideal $\mathfrak{m}_R$ of $R$ is principal,
    \item the map $f$ identifies $S$ with $R/I$ where $I$ is an ideal in $R$ and $I\cdot \mathfrak{m}_R=0$.
\end{itemize}
\end{Def}

\begin{Def}\label{localdefconditions}
A functor of deformations \[\tilde{\mathcal{C}}_v:\mathcal{C}_{\mathcal{O}}\rightarrow \op{Sets}\] of $\bar{\rho}_{\restriction \op{G}_v}$ is called a \textit{deformation condition} if the following conditions $\eqref{dc1}$ to $\eqref{dc3}$ stated below are satisfied. If in addition, $\eqref{dc4}$ is also satisfied, then it is said to be a \textit{liftable deformation condition}:
      \begin{enumerate}
          \item\label{dc1} $\tilde{\mathcal{C}}_v(\F_q)=\bar{\rho}_{\restriction \G_v}.$
          \item\label{dc2} For $i=1,2$, let $R_i\in \mathcal{O}$ and $\rho_i\in \tilde{\mathcal{C}}_v(R_i)$. Let $I_1$ be an ideal in $R_1$ and $I_2$ an ideal in $R_2$ such that there is an isomorphism $\alpha:R_1/I_1\xrightarrow{\sim} R_2/I_2$ satisfying \[\alpha(\rho_1 \;\text{mod}\;{I_1})=\rho_2 \;\text{mod}\;{I_2}.\] Let $R_3$ be the fibred product \[R_3=\lbrace(r_1,r_2)\mid \alpha(r_1\;\text{mod}\; I_1)=r_2\; \text{mod} \;I_2\rbrace\] and $\rho_1\times_{\alpha} \rho_2$ the induced $R_3$-representation, then $\rho_1\times_{\alpha} \rho_2\in \tilde{\mathcal{C}}_v(R_3)$.
          \item\label{dc3} Let $R\in \mathcal{C}_{\mathcal{O}}$ with maximal ideal $\mathfrak{m}_R$. If $\rho\in \tilde{\mathcal{C}}_v(R)$ and $\rho\in \tilde{\mathcal{C}}_v(R/\mathfrak{m}_R^n)$ for all $n>0$ it follows that $\rho\in \tilde{\mathcal{C}}_v(R)$. In other words, the functor $\tilde{\mathcal{C}}_v$ is continuous.
          \item\label{dc4}
          For every small extension $R\rightarrow S$ the induced map $\tilde{\mathcal{C}}_v(R)\rightarrow \tilde{\mathcal{C}}_v(S)$ is surjective.
      \end{enumerate}
      \end{Def}
     Condition $\eqref{dc2}$ is referred to as the Mayer-Vietoris property. By a well-known result of Grothendieck \cite[section 18]{Mazurintro}, conditions $\eqref{dc1},\eqref{dc2}$ and $\eqref{dc3}$ guarantee that $\tilde{\mathcal{C}}_v$ is pro-represented by a scheme $\op{Spec} R_v^{\op{univ}}$, where $R_v^{\op{univ}}\in  \mathcal{C}_{\mathcal{O}}$. In other words, there is a deformation 
 \[\rho_v^{\op{univ}}:\G_v\rightarrow \GL_2(R_v^{\op{univ}})\]of $\bar{\rho}_{\restriction \op{G}_v}$ such that any deformation $\varrho\in \tilde{\mathcal{C}}_v(R)$ is induced by a unique map of coefficient rings $R_v^{\op{univ}}\rightarrow R$. In addition, condition $\eqref{dc4}$ implies that $\op{Spec} R_v^{\op{univ}}$ is formally smooth.
 Recall that $\Ad\bar{\rho}$ denotes the $\F_q$ vector space of $2\times 2$ matrices over $\F_q$ on which $\G_{\Q,S}$ acts via the adjoint action. Set $\Ad^0\bar{\rho}$ for the $\F_q[\G_{\Q,S}]$ submodule of trace zero matrices in $\Ad \bar{\rho}$. The functor of deformations of $\bar{\rho}_{\restriction \G_v}$ is denoted $\widetilde{\operatorname{Def}}_{v}$. Let $\kappa: \op{G}_{\Q,S}\rightarrow \GL_1(\mathcal{O})$ be a lift of $\det \bar{\rho}$. Denote by $\tilde{\mathcal{C}}_v^{\kappa}$ the subfunctor for which the determinant character is set to be $\kappa_v:=\kappa_{\restriction \op{G}_v}$. In greater detail, $\tilde{\mathcal{C}}_v^{\kappa}(R)$ consists of deformations $\varrho\in \tilde{\mathcal{C}}_v(R)$ such that $\det \varrho$ is equal to the composite \[\op{G}_v\xrightarrow{\kappa_v} \GL_1(\mathcal{O})\rightarrow \op{GL}_1(R),\] where the latter map is induced from the structure map $\mathcal{O}\rightarrow R$. In a discussion in which $\kappa$ is fixed, we set $\mathcal{C}_v:=\tilde{\mathcal{C}}_v^{\kappa}$ for ease of notation. Likewise, the subfunctor of deformations with fixed determinant is denoted by $\operatorname{Def}_v$. The \textit{infinitesimal deformations} $\widetilde{\operatorname{Def}}_{v}(\F_q[\epsilon]/(\epsilon^2))$ and $\operatorname{Def}_{v}(\F_q[\epsilon]/(\epsilon^2))$ are $\F_q$-vector spaces and are equipped with natural isomorphisms \[\widetilde{\op{Def}}_{v}(\F_q[\epsilon]/(\epsilon^2))\simeq  H^1(\G_v, \Ad\bar{\rho}) \text{, and } \op{Def}_{v}(\F_q[\epsilon]/(\epsilon^2))\simeq  H^1(\G_v, \Ad^0\bar{\rho}).\] The infinitesimal deformation associated with the cohomology class $f\in H^1(\op{G}_v,  \Ad\bar{\rho})$ is \[(\op{Id}+f \epsilon) \bar{\rho}:\op{G}_{v}\rightarrow \op{GL}_2(\F_q[\epsilon]/(\epsilon^2)).\]
 Set $\tilde{\mathcal{N}}_v:=\tilde{\mathcal{C}}_v(\F_q[\epsilon])\subseteq H^1(\G_v, \Ad\bar{\rho})$, and $\mathcal{N}_v:=\mathcal{C}_v(\F_q[\epsilon])\subseteq H^1(\G_v, \Ad^0\bar{\rho})$. The space $\tilde{\mathcal{N}}_v$ can be identified with the tangent space of $R_v^{\op{univ}}$. The following standard fact is noted in \cite[Fact 5]{KhareRamakrishna} and proceeds from the discussion on local deformation conditions in \cite{RamakrishnaFM}.
\begin{Fact}\label{Fact1}
Fix a lift $\kappa:\op{G}_{\Q,S}\rightarrow \GL_1(\mathcal{O})$ of $\det \bar{\rho}$ and for each prime $v$, set $\kappa_v:=\kappa_{\restriction \op{G}_v}$. For $v\in S\backslash\{p\}$, there exists a liftable local deformation condition $\mathcal{C}_v$ of $\bar{\rho}_{\restriction \op{G}_v}$ with fixed determinant $\kappa_v$ such that $\dim \mathcal{N}_v=h^0(\op{G}_v, \Ad^0 \bar{\rho})$. These local deformation conditions are described in the proof of \cite[Proposition 1]{RamakrishnaFM} on a case by case basis. At the prime $p$, let $\mathcal{C}_p$ consist of ordinary deformations of $\bar{\rho}_{\restriction \op{G}_p}$ fixed determinant $\kappa_v$. The functor $\mathcal{C}_p$ is also a liftable deformation condition and $\dim \mathcal{N}_p=h^0(\op{G}_p, \Ad^0 \bar{\rho})+1$. Set $\tilde{\mathcal{C}}_p$ denote the deformation functor of ordinary deformations of $\bar{\rho}_{\restriction \op{G}_p}$ (for which the determinant is not fixed). For $v\neq p$, let $\tilde{\mathcal{C}}_v$ consist of the unramified central twists of $\mathcal{C}_v$.
\end{Fact}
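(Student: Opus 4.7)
The plan is to establish Fact \ref{Fact1} prime-by-prime, exhibiting at each $v \in S$ an explicit subfunctor of the ambient local deformation functor and verifying conditions \eqref{dc1}--\eqref{dc4} of Definition \ref{localdefconditions} together with the asserted dimension of $\mathcal{N}_v$. Conditions \eqref{dc1}--\eqref{dc3} are inherited from the ambient functor in each case, so the substance of the argument is liftability \eqref{dc4} and the exact tangent-space count.

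For $v \in S \setminus \{p\}$, I would divide into subcases according to the local structure of $\bar{\rho}_{\restriction \G_v}$, mirroring the case analysis of \cite[Proposition 1]{RamakrishnaFM}. In the unramified subcase, take $\mathcal{C}_v$ to be unramified deformations with determinant $\kappa_v$: liftability is immediate because $\G_v / I_v \cong \widehat{\Z}$ is procyclic, and the tangent space $H^1_{\mathrm{ur}}(\G_v, \Ad^0 \bar{\rho})$ has dimension $h^0(\G_v, \Ad^0 \bar{\rho})$ since Frobenius acts on a finite-dimensional $\F_q$-space with equal invariants and coinvariants. In the ramified subcase, take a Steinberg-type condition: fix a basis in which $\bar{\rho}_{\restriction \G_v}$ is upper-triangular with the cyclotomic ratio of diagonal characters, and declare $\mathcal{C}_v(R)$ to consist of lifts retaining this shape with determinant $\kappa_v$. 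Liftability and the count $\dim \mathcal{N}_v = h^0(\G_v, \Ad^0 \bar{\rho})$ then follow from a Borel-subalgebra cohomology calculation, using local Tate duality together with the vanishing $\chi(\G_v, \Ad^0 \bar{\rho}) = 0$ of the local Euler characteristic away from $p$.

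At $v = p$, define $\mathcal{C}_p(R)$ to consist of those lifts of $\bar{\rho}_{\restriction \G_p}$ which, in a basis compatible with the given ordinary filtration, are upper-triangular with determinant $\kappa_p$. For a small extension $R \onto S$ and $\varrho_S \in \mathcal{C}_p(S)$, liftability is established inductively: one first lifts the two diagonal characters of $\varrho_S$ (one of which is pinned down by $\kappa_p$) using formal smoothness of character deformations, then lifts the upper-right extension class, the vanishing of the relevant $H^2$ obstruction following from local Tate duality together with assumption \eqref{c9ofmain}. The tangent-space dimension $h^0(\G_p, \Ad^0 \bar{\rho}) + 1$ is then read off from the short exact sequence $0 \to N \to B^0 \to T^0 \to 0$ of Galois-stable subquotients of $\Ad^0 \bar{\rho}$ (with $N, B^0, T^0$ the nilpotent, Borel, and diagonal trace-zero parts), the extra ``$+1$'' reflecting the ramified contribution to $H^1(\G_p, N)$ beyond what the unramified $H^0$ sees.

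For $v \neq p$, the functor of unramified central twists $\tilde{\mathcal{C}}_v(R) = \{\varrho \otimes \psi : \varrho \in \mathcal{C}_v(R),\ \psi: \G_v \to R^\times \text{ unramified},\ \psi \equiv 1 \bmod \mathfrak{m}_R\}$ inherits liftability from $\mathcal{C}_v$ together with the formal smoothness of unramified character deformations, and its tangent space picks up exactly the one extra dimension of $H^1_{\mathrm{ur}}(\G_v, \F_q)$, yielding $\dim \tilde{\mathcal{N}}_v = h^0(\G_v, \Ad^0 \bar{\rho}) + 1$. The main obstacle is the case-by-case verification at ramified primes $v \neq p$: for each possible local image of $\bar{\rho}_{\restriction \G_v}$ one must exhibit a formally smooth condition whose tangent space has precisely the $h^0$-dimensional size inside $H^1(\G_v, \Ad^0 \bar{\rho})$, which is the content of Ramakrishna's local proposition and the detailed representation-theoretic computations behind it.
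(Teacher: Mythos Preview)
Your outline is correct and follows exactly the approach of the references the paper cites. Note, however, that the paper does not supply its own proof of this statement: it is recorded as a \emph{Fact}, attributed to \cite[Fact 5]{KhareRamakrishna} and the case analysis in \cite[Proposition 1]{RamakrishnaFM}, so there is no in-paper argument to compare against beyond those citations. Your prime-by-prime sketch (unramified condition when $\bar{\rho}_{\restriction \G_v}$ is unramified, Steinberg-type otherwise, ordinary at $p$, then unramified central twists for $\tilde{\mathcal{C}}_v$) is precisely the content of those references, and you correctly identify the substantive work as the exhaustive local case analysis at ramified $v\neq p$.

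Two small points worth tightening. First, the claim that conditions \eqref{dc1}--\eqref{dc3} are ``inherited from the ambient functor'' is not quite automatic: the fiber-product condition \eqref{dc2} must be checked for the subfunctor itself, though for each of the conditions you list (unramified, upper-triangular of prescribed shape) it holds because the defining property is visibly stable under fiber products. Second, your treatment of ramified $v\neq p$ singles out only the Steinberg shape; Ramakrishna's Proposition covers several further local types (e.g.\ twists of principal series, induction from a quadratic extension), each requiring its own tangent-space bookkeeping. You acknowledge this at the end, which is fair, but a reader should understand that the single ``Steinberg'' paragraph is a placeholder for a longer enumeration rather than the full argument.
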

Let $R\rightarrow R/I$ be a small extension and $t$ a generator of the maximal ideal of $R$. Let $v\in S$ and $\tilde{\mathcal{C}}_v$ as above. Let $\varrho\in \tilde{\mathcal{C}}_v(R)$ and $\varrho_0:=\varrho\mod{I}$. The twist of $\varrho$ by a cohomology class $X\in \tilde{\mathcal{N}}_v$ is precribed by
$\operatorname{exp}(X\otimes t)\varrho:=(\operatorname{Id}+X t)\varrho$. Furthermore, the twist is a deformation of $\varrho_0$. The fibers of $\varrho_0$ w.r.t the mod $I$ reduction map $\tilde{\mathcal{C}}_v(R)\rightarrow \tilde{\mathcal{C}}_v(R/I)$ is an $\tilde{\mathcal{N}}_v$-pseudotorsor. Likewise, the fibers of $\mathcal{C}_v(R)\rightarrow \mathcal{C}_v(R/I)$ is an $\mathcal{N}_v$-pseudotorsor.\begin{Fact}\label{tildeNdef}
\begin{enumerate}
\item For $v\neq p$, we have that $\tilde{\mathcal{N}}_v=\mathcal{N}_v\oplus H_{\operatorname{nr}}^1(\G_v, \F_q)$ and the dimension of $\tilde{\mathcal{N}}_v$ is equal to $h^0(\op{G}_v, \op{Ad}\bar{\rho})$. It is easy to show that the dimension of $\tilde{\mathcal{N}}_v$ is equal to $h^0(\op{G}_v, \op{Ad}\bar{\rho})$ for $v\in S\backslash \{p\}$.
\item For $v=p$, the tangent space $\tilde{\mathcal{N}}_p$ properly contains the direct sum of $\mathcal{N}_p\oplus H^1_{\operatorname{nr}}(\G_p, \F_q)$. Set $W=\mtx{*}{*}{}{}$, observe that $W$ is stable under conjugation by upper triangular matrices. We have that \[\tilde{\mathcal{N}}_p=\ker\{H^1(\G_p, \Ad\bar{\rho})\rightarrow H^1(I_p, \Ad\bar{\rho}/W)\}\] as the choice of tangent space at $p$.
\end{enumerate}
\end{Fact}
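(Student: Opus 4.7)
The plan is to unpack the definitions of $\tilde{\mathcal{C}}_v$ and express the tangent spaces in Galois cohomology, using throughout the splitting $\Ad\bar\rho \cong \Ad^0\bar\rho \oplus \F_q\cdot\op{Id}$ (available because $p>2$), which induces
\[
H^1(\G_v, \Ad\bar\rho) = H^1(\G_v, \Ad^0\bar\rho) \oplus H^1(\G_v, \F_q).
\]

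For part (1), since $\tilde{\mathcal{C}}_v$ is by definition the set of unramified central twists of $\mathcal{C}_v$, an infinitesimal deformation $(\op{Id} + f\epsilon)\bar\rho \in \tilde{\mathcal{N}}_v$ corresponds to a cocycle that decomposes as $f = f_0 + c\cdot\op{Id}$ with $f_0 \in \mathcal{N}_v$ and $c \in H^1_{\operatorname{nr}}(\G_v, \F_q)$. These summands sit in distinct components of the above splitting, so $\tilde{\mathcal{N}}_v = \mathcal{N}_v \oplus H^1_{\operatorname{nr}}(\G_v, \F_q)$ is automatic. Combining Fact \ref{Fact1} with the standard $\dim H^1_{\operatorname{nr}}(\G_v, \F_q) = h^0(\G_v, \F_q) = 1$ gives $\dim \tilde{\mathcal{N}}_v = h^0(\G_v, \Ad^0\bar\rho) + 1 = h^0(\G_v, \Ad\bar\rho)$.

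For part (2), I would first verify by a direct matrix computation that $W = \mtx{*}{*}{}{}$ is stable under conjugation by upper triangular matrices, so that $\Ad\bar\rho/W$ inherits a $\G_p$-module structure via $\bar\rho_{\restriction \G_p}$. The infinitesimal ordinary condition at $p$ then unpacks as: there exists $X \in \Ad\bar\rho$ such that $(f - dX)|_{I_p}$ has vanishing bottom row, where $dX(g) := X - \bar\rho(g) X \bar\rho(g)^{-1}$; this says that after the change of basis $\op{Id} + X\epsilon$, the lift restricted to $I_p$ is upper triangular with unperturbed lower-right character. Translating to cohomology, this is exactly $f|_{I_p} \equiv 0$ in $H^1(I_p, \Ad\bar\rho/W)$, yielding the kernel description. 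For the strict containment, the trace map on cocycles fits into a short exact sequence $0 \to \mathcal{N}_p \to \tilde{\mathcal{N}}_p \xrightarrow{\op{tr}} H^1(\G_p, \F_q) \to 0$; surjectivity here comes from the freedom to twist the $(1,1)$-character of a lift by an arbitrary $\F_q$-valued character while keeping the $(2,2)$-character unramified, with the $(1,2)$-entry adjusted via the vanishing of an appropriate $H^2$-obstruction that follows from the hypotheses of Theorem \ref{main} via local Tate duality. Hence $\dim \tilde{\mathcal{N}}_p = \dim \mathcal{N}_p + 2 > \dim \mathcal{N}_p + 1 = \dim(\mathcal{N}_p \oplus H^1_{\operatorname{nr}}(\G_p, \F_q))$.

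The main obstacle is the surjectivity step in the strict-containment argument: when $\bar\rho$ is indecomposable, a ramified twist of the $(1,1)$-character cannot be realized without a compensating modification of the $(1,2)$-entry, and the existence of such a compensation reduces to the vanishing of an $H^2$-obstruction controlled by $\zeta_{\restriction \G_p}$. Fortunately, the hypotheses in Theorem \ref{main} (in particular \eqref{c9ofmain}) are tailored precisely to eliminate this obstruction, after which the cohomological translations and dimension counts described above are essentially routine.
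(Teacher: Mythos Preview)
The paper states this as a Fact without proof; the dimension $\dim\tilde{\mathcal N}_p = h^0(\G_p,\Ad\bar\rho)+2$ (which yields the strict containment) is established only in the two Propositions that follow. So your write-up supplies an argument the paper itself omits. Your treatment of part~(1) is correct and standard. For part~(2), two remarks.

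First, your derivation of the kernel description has a small gap. The ordinary condition at $p$ asks for a $\G_p$-stable line with inertia acting trivially on the quotient; infinitesimally this means there is $X$ with $f-dX$ having zero $(2,1)$-entry on all of $\G_p$ \emph{and} zero $(2,2)$-entry on $I_p$. You impose both vanishings only on $I_p$. The two descriptions do agree, but only because $H^1_{\operatorname{nr}}(\G_p,\Ad\bar\rho/\tilde U)=0$ (since $\Ad\bar\rho/\tilde U\simeq\F_q(\varphi^{-1})$ and $\varphi_{\restriction\G_p}\neq 1$ by hypothesis~\eqref{c9ofmain}), which forces any class unramified modulo $\tilde U$ to already come from $H^1(\G_p,\tilde U)$. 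The paper makes exactly this reduction in the paragraph following the Fact; you should insert it.

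Second, your strict-containment argument via the trace map is genuinely different from the paper's route. The paper projects in the trace-zero direction $\pi_1:\tilde U\to U$ and proves
\[
0\longrightarrow H^1_{\operatorname{nr}}(\G_p,\F_q\cdot\op{Id})\longrightarrow\tilde{\mathcal N}_p\longrightarrow H^1(\G_p,U)\longrightarrow 0,
\]
then computes $h^1(\G_p,U)$ via the filtration $0\to U^0\to U\to U/U^0\to 0$. Your approach is in a sense dual: you project onto the scalar direction and obtain $0\to\mathcal N_p\to\tilde{\mathcal N}_p\xrightarrow{\op{tr}}H^1(\G_p,\F_q)\to 0$, with surjectivity coming from $H^2(\G_p,\F_q(\varphi))=0$ (this is your ``$H^2$-obstruction'', killed by $\zeta_{\restriction\G_p}\neq\bar\chi_{\restriction\G_p}$). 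Both give $\dim\tilde{\mathcal N}_p=\dim\mathcal N_p+2$. Your version has the virtue of using $\dim\mathcal N_p$ as a black box from Fact~\ref{Fact1}; the paper's version is more self-contained but in effect recomputes part of that input.
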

\subsection{Dimension Calculations}
\par  In this section, we show that the dimension of $\tilde{\mathcal{N}}_p$ is equal to $h^0(\op{G}_p, \op{Ad}\bar{\rho})+2$. Let $U\subset \Ad^0\bar{\rho}$ be the subspace of upper triangular matrices $\mtx{a}{b}{0}{-a}$, and $U^0\subset U$ be the subspace of strictly upper triangular matrices $\mtx{0}{b}{0}{0}$. Since $\bar{\rho}=\mtx{\varphi}{\ast}{0}{1}$, both $U^0$ and $U$ are Galois stable.
\par 
Let $\tilde{U}$ denote the Galois stable space of upper triangular matrices $\mtx{a}{b}{0}{d}$, we have that $\tilde{U}=U\oplus \F_q \operatorname{Id}$. The quotient $\Ad\bar{\rho}/{\tilde{U}}=\F_q(\varphi^{-1})$ has no fixed points for the action of $\G_p$ and hence $H^1_{\operatorname{nr}}(\G_p, \Ad\bar{\rho}/{\tilde{U}})=0$.
Consequently, $\tilde{\mathcal{N}}_p$ may be identified with the subspace \[\tilde{\mathcal{N}}_p=\ker\left\lbrace H^1(\G_p, \tilde{U})\rightarrow H^1(I_p, \tilde{U}/W)\right\rbrace\] of $H^1(\G_p, \tilde{U})$.
\par For the decomposition
\[H^1(\G_p,\tilde{U})\xrightarrow{\sim} H^1(\G_p,U)\oplus H^1(\G_p, \F_q\cdot \operatorname{Id})\]
let $\pi_1$ and $\pi_2$ denote the projection maps to
$H^1(\G_p,U)$ and $H^1(\G_p,\F_q \cdot \operatorname{Id})$ respectively.

The map on restriction $\pi_1'={\pi_1}_{\restriction \tilde{\mathcal{N}}_p}$ induces an exact sequence.
\begin{Prop}
The map on restriction $\pi_1'={\pi_1}_{\restriction \tilde{\mathcal{N}}_p}$ induces a short exact sequence \[0\rightarrow H^1_{\operatorname{nr}}(\G_p, \F_q \cdot \operatorname{Id})\rightarrow \tilde{\mathcal{N}}_p\xrightarrow{\pi_1'} H^1(\G_p,U)\rightarrow 0.\]
\end{Prop}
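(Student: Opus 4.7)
The plan is to introduce a second direct-sum decomposition of $\tilde{U}$ that is better adapted to the quotient $\tilde{U}/W$ than the decomposition $\tilde{U} = U \oplus \F_q \cdot \operatorname{Id}$ already in play. Since elements of $W$ have vanishing $(2,2)$-entry while nonzero scalar matrices have equal nonzero diagonal entries, the intersection $W \cap (\F_q \cdot \operatorname{Id})$ is trivial; a dimension count then gives $\tilde{U} = W \oplus \F_q \cdot \operatorname{Id}$ as $\G_p$-modules. Under this splitting, the quotient $\tilde{U} \twoheadrightarrow \tilde{U}/W$ is simply the second projection, so
\[H^1(\G_p, \tilde{U}) = H^1(\G_p, W) \oplus H^1(\G_p, \F_q \cdot \operatorname{Id}),\]
and a pair $(w, c)$ belongs to $\tilde{\mathcal{N}}_p$ precisely when $c|_{I_p} = 0$. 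This yields the clean description
\[\tilde{\mathcal{N}}_p = H^1(\G_p, W) \oplus H^1_{\operatorname{nr}}(\G_p, \F_q \cdot \operatorname{Id}).\]

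Next, I will read off the behavior of $\pi_1$ with respect to this new decomposition. The projection $\tilde{U} \twoheadrightarrow U$ along $\F_q \cdot \operatorname{Id}$ is $\G_p$-equivariant, annihilates $\F_q \cdot \operatorname{Id}$, and sends $\mtx{a}{b}{0}{0} \in W$ to $\mtx{a/2}{b}{0}{-a/2} \in U$. Because $p \geq 3$ by hypothesis $\eqref{c1ofmain}$ of Theorem $\ref{main}$, this restriction is a $\G_p$-isomorphism $W \xrightarrow{\sim} U$; consequently, after passing to cohomology, $\pi_1$ annihilates the second summand $H^1(\G_p, \F_q \cdot \operatorname{Id})$ and restricts to an isomorphism $H^1(\G_p, W) \xrightarrow{\sim} H^1(\G_p, U)$.

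Combining the two observations, $\pi_1'$ vanishes on the summand $H^1_{\operatorname{nr}}(\G_p, \F_q \cdot \operatorname{Id})$ of $\tilde{\mathcal{N}}_p$ and maps the complementary summand $H^1(\G_p, W)$ isomorphically onto $H^1(\G_p, U)$. Both the identification of the kernel and surjectivity of $\pi_1'$ are immediate from this, giving the claimed short exact sequence. I do not expect a genuine obstacle here: once the complementary splitting $\tilde{U} = W \oplus \F_q \cdot \operatorname{Id}$ is recognized, the proof reduces to a bookkeeping argument together with the standard fact that group cohomology distributes over direct sums of Galois modules. The only substantive hypothesis used is $p > 2$, which is what makes the halving operation identifying $W$ with $U$ available.
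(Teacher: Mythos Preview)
Your argument is correct. Both your proof and the paper's hinge on the same key observation---that $\F_q\cdot\operatorname{Id}$ is a $\G_p$-equivariant complement to $W$ inside $\tilde{U}$---but you package it differently. The paper argues in two separate steps: it computes $\ker\pi_1'$ as $H^1(\G_p,\F_q\cdot\operatorname{Id})\cap\tilde{\mathcal{N}}_p$, and then for surjectivity takes $f\in H^1(\G_p,U)$ and uses the isomorphism $\F_q\cdot\operatorname{Id}\xrightarrow{\sim}\tilde{U}/W$ to produce a correcting central class $f_1$ with $f-f_1\in\tilde{\mathcal{N}}_p$. You instead make the splitting $\tilde{U}=W\oplus\F_q\cdot\operatorname{Id}$ explicit from the outset, obtain the internal direct-sum description $\tilde{\mathcal{N}}_p=H^1(\G_p,W)\oplus H^1_{\operatorname{nr}}(\G_p,\F_q\cdot\operatorname{Id})$, and then identify $H^1(\G_p,W)\xrightarrow{\sim}H^1(\G_p,U)$ via the halving map. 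Your route has the small bonus of exhibiting an explicit splitting of the short exact sequence; the paper's route avoids writing down the halving isomorphism and instead corrects classes in place. The hypothesis $p>2$ is used in both arguments, since already the decomposition $\tilde{U}=U\oplus\F_q\cdot\operatorname{Id}$ defining $\pi_1$ requires it.
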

\begin{proof}
As noted earlier, $\tilde{\mathcal{N}}_p$ is identified with the subspace \[\tilde{\mathcal{N}}_p=\ker\left\lbrace H^1(\G_p, \tilde{U})\rightarrow H^1(I_p, \tilde{U}/W)\right\rbrace\] of $H^1(\G_p, \tilde{U})$. The kernel of the projection $\pi_1:H^1(\G_p,\tilde{U})\rightarrow H^1(\G_p, U)$ is $H^1(\G_p, \F_q\cdot \operatorname{Id})$. As a result, the kernel of $\pi_1'$ is the intersection 
$H^1(\G_p, \F_q \cdot \operatorname{Id})\cap \tilde{\mathcal{N}}_p$, which consists of unramified central classes $H^1(\G_p,\F_q\cdot \operatorname{Id})$. This shows that the sequence is exact in the middle.
\par The map on the left is clearly injective. We prove that the map $\pi_1'$ is surjective.  Let $f\in H^1(\G_p, U)$, we show that $f$ is in the image of $\pi_1'$. It suffices to show that there exists an element $f'\in H^1(\G_p, \F_q\cdot \operatorname{Id})$ such that $f-f'\in \tilde{\mathcal{N}}_p$, i.e. the class $f-f'$ must map to zero in $H^1(I_p, \tilde{U}/W)$.
\par We observe that the composite of the inclusion of $\F_q\cdot \operatorname{Id}$ into $\tilde{U}$ with the quotient $\tilde{U}\rightarrow \tilde{U}/W$ is an isomorphism of $\G_p$ modules (both spaces are fixed by $\G_p$). We simply take $f_1$ to coincide with $f$ modulo $W$ w.r.t this isomorphism. This completes the proof.
\end{proof}
\begin{Prop}
For the dimension of $H^1(\G_p, U)$ we are to consider two cases
\[h^1(\G_p, U)=\begin{cases}2\text{ if }\bar{\rho}_{\restriction \G_p}\text{ is indecomposable}\\
3\text{ otherwise if }\bar{\rho}_{\restriction \G_p}\text{ is a sum of characters}
\end{cases}\] in other words,
\[h^1(\G_p, U)=2+h^0(\G_p, \Ad^0\bar{\rho}).\]
Consequently, dimension of $\tilde{\mathcal{N}}_p$ is
\[\begin{split}\dim \tilde{\mathcal{N}}_p= & h^1_{\operatorname{nr}}(\G_p,\F_q\cdot \operatorname{Id})+ h^1(\G_p, U)\\
=& 3+h^0(\G_p, \Ad^0\bar{\rho})\\
=& 2+h^0(\G_p, \Ad\bar{\rho}).
\end{split}
\]
\end{Prop}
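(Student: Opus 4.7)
The plan is to analyze the short exact sequence of $\G_p$-modules
$$0\to U^0\to U\to \F_q\to 0,$$
where $U^0\simeq\F_q(\varphi)$ is the strictly upper-triangular part of $U$ and the quotient carries the trivial action, and to read off $h^1(\G_p,U)$ from the associated long exact sequence.

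First I would use assumption $\eqref{c9ofmain}$ together with local Tate duality to check that $H^0(\G_p,\F_q(\varphi))=0$, $H^2(\G_p,\F_q(\varphi))=H^0(\G_p,\F_q(\varphi^{-1}\bar{\chi}))^{\vee}=0$, $H^0(\G_p,\F_q)=\F_q$, and $H^2(\G_p,\F_q)=H^0(\G_p,\F_q(\bar{\chi}))^{\vee}=0$, the last because $\bar{\chi}_{\restriction \G_p}$ is non-trivial for $p>2$. The local Euler characteristic formula then yields $h^1(\G_p,\F_q(\varphi))=1$ and $h^1(\G_p,\F_q)=2$, and the long exact sequence collapses to
$$0\to H^0(\G_p,U)\to \F_q\to H^1(\G_p,\F_q(\varphi))\to H^1(\G_p,U)\to H^1(\G_p,\F_q)\to 0.$$
Taking the alternating sum of dimensions gives $h^1(\G_p,U)=h^0(\G_p,U)+2$.

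The remaining content is to compute $h^0(\G_p,U)$ and identify it with $h^0(\G_p,\op{Ad}^0\bar{\rho})$. A direct matrix calculation shows that $\mtx{a}{b}{0}{-a}\in U$ is fixed by conjugation by $\bar{\rho}(g)=\mtx{\varphi(g)}{\psi(g)}{0}{1}$ if and only if $(\varphi(g)-1)b=2a\psi(g)$ for every $g\in\G_p$. When $a=0$ this forces $b=0$ because $\varphi_{\restriction \G_p}\neq 1$, and when $a\neq 0$ it exhibits $\psi$ as a coboundary, i.e.\ it forces the extension class $[\psi]\in H^1(\G_p,\F_q(\varphi))$ to vanish, which is equivalent to $\bar{\rho}_{\restriction \G_p}$ being a direct sum of characters. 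Thus $h^0(\G_p,U)=0$ in the indecomposable case and $h^0(\G_p,U)=1$ in the decomposable case. These values agree with $h^0(\G_p,\op{Ad}^0\bar{\rho})$: in the decomposable case $\op{Ad}^0\bar{\rho}\simeq \F_q(\varphi)\oplus\F_q\oplus\F_q(\varphi^{-1})$ and its one-dimensional fixed subspace lies in $U$, while in the indecomposable case the absence of Galois fixed vectors outside $U$ follows from the same cocycle manipulation applied to a general trace-zero matrix.

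For the final dimension formula for $\tilde{\mathcal{N}}_p$ I would combine the previous proposition with $h^1_{\operatorname{nr}}(\G_p,\F_q\cdot\operatorname{Id})=1$ and rewrite $3+h^0(\G_p,\op{Ad}^0\bar{\rho})=2+h^0(\G_p,\op{Ad}\bar{\rho})$ using $\op{Ad}\bar{\rho}=\op{Ad}^0\bar{\rho}\oplus \F_q\cdot\operatorname{Id}$. The main obstacle is the case analysis for $h^0(\G_p,U)$; the essential ingredient is the cocycle identity $(\varphi-1)b=2a\psi$, which makes use of assumption $\eqref{c1ofmain}$ so that $2$ is invertible in $\F_q$.
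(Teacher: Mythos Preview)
Your proposal is correct and follows essentially the same route as the paper: both use the short exact sequence $0\to U^0\to U\to U/U^0\to 0$, the vanishing of $H^0$ and $H^2$ for $U^0\simeq\F_q(\varphi)$ coming from assumption~\eqref{c9ofmain}, and the local Euler characteristic formula to obtain $h^1(\G_p,U)=2+h^0(\G_p,U)$. The only difference is that you supply the explicit cocycle computation for $h^0(\G_p,U)$ and its identification with $h^0(\G_p,\Ad^0\bar{\rho})$, whereas the paper simply records the two cases; for the latter identification a slightly quicker alternative is to note that $\Ad^0\bar{\rho}/U\simeq\F_q(\varphi^{-1})$ has no $\G_p$-invariants, so $H^0(\G_p,\Ad^0\bar{\rho})=H^0(\G_p,U)$ automatically.
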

\begin{proof}
 We will make use of the cohomology sequence associated with the short exact sequence
\[0\rightarrow U^0\rightarrow U\rightarrow U/U^0\rightarrow 0.\] From the isomorphism $U^0\simeq \F_q(\varphi)$ and the fact that $\varphi_{\restriction \G_p}\neq 1, \bar{\chi}^{-1}_{\restriction \G_p}$. The assumptions on $\varphi_{\restriction \G_p}$ ensure that $H^2(\G_p, U^0)=0$. From the Euler characteristic formula we have that $ H^1(\G_p, U^0)$ is $1$ dimensional.
\par From the Euler characteristic formula, 
\[h^1(\G_p, U/U^0)=2\]
and 
\[h^1(\G_p, U^0)=1.\] From the long exact sequence, we have the following formula for the dimension of $H^1(\G_p,U)$
\[\begin{split}h^1(\G_p,U)= &h^1(\G_p, U^0)+h^1(\G_p, U/U^0)+h^0(\G_p, U)-1\\
=& 2+h^0(\G_p,U)\\
=& \begin{cases} 2 \text{ if } \bar{\rho}_{\restriction \G_p} \text{ is indecomposable}\\
3 \text{ otherwise.}
\end{cases}
\end{split}\]
\end{proof}
\par Condition \ref{c8ofmain} requires that $\varphi_{\restriction I_p}=\chi^{k-1}_{\restriction I_p}$ where $2\leq k\leq p-1$.
We first enumerate the possibilities which arise when $k\geq 3$.
\begin{Prop}\label{KhareRamProp}
Suppose the weight $k\neq 1,2$ so that $3\leq k\leq p-1$. Recall that if $k=p-1$ it is stipulated that $\varphi_{\restriction \G_p}\neq \bar{\chi}^{-1}_{\restriction \G_p}$. Let $\tau= \varphi_{\restriction \G_p}$ be a product of an unramified character with $\bar{\chi}^{k-1}$. With these assumptions, there are the following cases to consider
\begin{enumerate}
\item 
$\bar{\rho}_{\restriction \G_p}=\mtx{\tau}{}{}{1}$ up to twisting by an unramified character. The ordinary arbitrary-weight deformation ring is smooth in $4$ variables and $\dim \tilde{\mathcal{N}}_p=4$, $h^0(\G_p, \Ad\bar{\rho})=2$,
or,
\item
$\bar{\rho}_{\restriction \G_p}=\mtx{\tau}{*}{}{1}$ is indecomposable in which case, the ordinary arbitrary-weight deformation ring is smooth in $3$ variables and $\dim \tilde{\mathcal{N}}_p=3$, $h^0(\G_p, \Ad\bar{\rho})=1$.
\end{enumerate}
\end{Prop}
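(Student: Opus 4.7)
The plan is three-fold: compute $h^0(\G_p, \op{Ad}\bar{\rho})$ case by case, substitute into the identity $\dim \tilde{\mathcal{N}}_p = 2 + h^0(\G_p, \op{Ad}\bar{\rho})$ from the preceding proposition, and then deduce formal smoothness of $\tilde{\mathcal{C}}_p$ from the vanishing of $H^2(\G_p, U^0)$ already recorded in the course of that proof.

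The hypothesis $3 \leq k \leq p-1$ makes $\chi^{k-1}_{\restriction I_p}$ nontrivial and hence $\tau$ nontrivial on $\G_p$. In the split case $\bar{\rho}_{\restriction \G_p} = \op{diag}(\tau, 1)$, a matrix $N = \mtx{a}{b}{c}{d}$ commuting with the image of $\bar{\rho}_{\restriction \G_p}$ is forced to have $b = c = 0$, so the centralizer is two-dimensional and $h^0(\G_p, \op{Ad}\bar{\rho}) = 2$. In the indecomposable case, write $\bar{\rho}_{\restriction \G_p}(g) = \mtx{\tau(g)}{\alpha(g)}{0}{1}$ for a non-coboundary cocycle $\alpha$; the commutation $\bar{\rho}(g) N = N \bar{\rho}(g)$ forces $c = 0$ (using the nontriviality of $\tau$) and yields $(\tau(g) - 1) b = (a - d) \alpha(g)$ for every $g \in \G_p$. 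If $a \neq d$, then $\alpha$ is the coboundary of $b/(a-d)$, contradicting indecomposability; hence $a = d$, and then the relation gives $b = 0$. Thus $h^0(\G_p, \op{Ad}\bar{\rho}) = 1$ (scalars only). Feeding these values into the tangent-space identity yields $\dim \tilde{\mathcal{N}}_p = 4$ and $\dim \tilde{\mathcal{N}}_p = 3$, as claimed.

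For the smoothness assertion, I would verify condition \eqref{dc4} of Definition \ref{localdefconditions} by an obstruction argument: given a small extension $R \twoheadrightarrow S$ and an ordinary deformation of $\bar{\rho}_{\restriction \G_p}$ to $S$, the diagonal characters extend freely to $R$, and the obstruction to extending the upper-right entry is a class in $H^2(\G_p, U^0) \cong H^2(\G_p, \F_q(\varphi))$, which vanishes by the hypotheses on $\zeta_{\restriction \G_p}$ already used in the preceding proposition via local Tate duality. Formal smoothness together with the tangent-space dimension then forces the universal ring to be a power-series ring over $\mathcal{O}$ in $4$ variables in case (1) and in $3$ variables in case (2).

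The main obstacle is the centralizer computation in the indecomposable case: the collapse from a two-dimensional centralizer (as in the split situation) down to a one-dimensional one hinges on $\alpha$ not being a coboundary, which is the sole place where indecomposability of the extension is exploited. The other ingredients reduce to standard cohomological bookkeeping.
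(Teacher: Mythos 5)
Your proposal is correct, though it reaches the conclusion by a somewhat different route than the paper. You compute $h^0(\G_p, \Ad\bar{\rho})$ directly from the centralizer of $\bar{\rho}_{\restriction \G_p}$ and substitute into the identity $\dim \tilde{\mathcal{N}}_p = 2 + h^0(\G_p, \Ad\bar{\rho})$ proved in the preceding proposition. The paper instead redoes the Euler-characteristic count at the level of $\tilde{U}$: it shows $H^2(\G_p, \tilde{U})=0$ via local duality, computes $h^1(\G_p, \tilde{U})$ ($=5$ or $4$), and then passes to the ordinary tangent space by cutting one dimension for the ordinarity condition at a topological generator of the cyclotomic line. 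The smoothness arguments likewise differ: the paper deduces that the \emph{upper-triangular} deformation ring is a power series ring from $H^2(\G_p, \tilde{U})=0$ and then observes that the ordinary ring is the quotient by a single variable, while you lift ordinary deformations across a small extension directly by first lifting the diagonal characters (no obstruction for $\op{GL}_1$-valued deformations of $\G_p$ when $p>2$, since $H^2(\G_p,\F_q)=0$, and ordinarity can be preserved in the diagonal lift) and then placing the obstruction to the off-diagonal entry in $H^2(\G_p, U^0)\simeq H^2(\G_p,\F_q(\varphi))=0$. Both routes rest on the same duality computations; yours is more modular in reusing the earlier tangent-space formula, while the paper obtains the stronger intermediate fact $H^2(\G_p,\tilde{U})=0$ along the way. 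Incidentally, your centralizer computation exposes a typo in the paper's case (2): the assertion $h^0(\G_p,\tilde{U})=2$ should read $h^0(\G_p,\tilde{U})=1$, and it is the value $1$ that is actually consistent with the Euler-characteristic output $h^1(\G_p,\tilde{U})=4$ recorded there.
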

\begin{proof}
\par 
There is an unramified character $\eta:\G_p\rightarrow \F_q^{\times}$ such that $\tau=\eta \bar{\chi}^{k-1}_{\restriction \G_p}$. It follows from the assumptions on $k$ and $\varphi$ that the characters $\tau,\tau \bar{\chi}_{\restriction \G_p}$ and $\tau \bar{\chi}_{\restriction \G_p}^{-1}$ are all nontrivial. In both cases we compare the ordinary deformation problem to the upper triangular deformation problem. Observe that $\Ad\bar{\rho}/\tilde{U}$ can be identified with $\F_q(\tau^{-1})$.
\par Since $\tau\neq 1$, we have that $H^1_{\operatorname{nr}}(\G_p, \Ad\bar{\rho}/\tilde{U})=0$
and consequently 
\[\tilde{\mathcal{N}}_p=\ker\{H^1(\G_p, \tilde{U})\rightarrow H^1(I_p, \tilde{U}/W)\}\] where we recall that $W\subset \tilde{U}$ consists of matrices $\mtx{\ast}{\ast}{0}{0}$. 
\par In both cases $\tilde{\mathcal{C}}_p$ can be derived from the upper triangular deformation problem by imposing the condition that a generator of $Gal(\Q^{cyc}/\Q)\simeq \mathbb{Z}_p$ maps to an upper triangular matrix for which the lower right entry is trivial.
\par
1. In the first case, $\tilde{U}\simeq \F_{p}^2\oplus \F_q(\tau)$. An application of Local-Duality implies that 
\[\begin{split}
& h^2(\G_p, \tilde{U})\\=&h^0(\G_p, \tilde{U}^*)\\
=&2h^0(\G_p, \F_q(\bar{\chi}_{\restriction \G_p}))+h^0(\G_p,\F_q(\tau^{-1}\bar{\chi}_{\restriction \G_p}))\\
=&0.
\end{split}\]
An application of the Euler Characteristic Formula shows that
\[\begin{split}
&h^1(\G_p,\tilde{U})\\=&h^0(\G_p,\tilde{U})+h^2(\G_p,\tilde{U})+\dim \tilde{U}\\
=&2+0+3=5.
\end{split}\]
Consequently, the deformation ring of unrestricted upper triangular deformations of $\bar{\rho}_{\restriction \G_p}$ is a power series ring in five variables. The relation imposed for a deformation to be ordinary comes from setting the ramified part of the lower right entry when evaluated at a topological generator of the cyclotomic extension of $\Q_p$. This restriction cuts down dimension of the tangent space by one and corresponds to going modulo one of the variables in the tangent space. We can take this to be a variable in a power series ring in five variables. The deformation ring is a power series ring in four variables, for further details the reader is referred to the proof of part (1) of \cite[Proposition 11]{KhareRamakrishna}. This completes the proof of the first part.
\par Next, we prove part (2). Since the diagonal characters of $\tilde{U}^*$ are nontrivial, it follows that $H^2(\G_p,\tilde{U})=0$. Since we are assuming that $\bar{\rho}_{\restriction \G_p}$ is indecomposable $h^0(\G_p, \tilde{U})=1$. An application of the Euler Characteristic Formula shows that
\[\begin{split}
&h^1(\G_p,\tilde{U})\\=&h^0(\G_p,\tilde{U})+h^2(\G_p,\tilde{U})+\dim \tilde{U}\\
=&1+0+3=4.
\end{split}\] The rest of the proof is identical to that of part (2) of \cite[Proposition 11]{KhareRamakrishna}.

\end{proof}

\par Proposition \ref{KhareRamProp} generalizes the first two cases of \cite[Proposition 11]{KhareRamakrishna}, in which $k$ is prescribed to be $2$. We refer to \cite[Proposition 11]{KhareRamakrishna} for an enumeration of all the possibilities for $k=2$.
\subsection{Selmer Groups}
\par Let $M$ be an $\F_q[\G_{\Q,S}]$-module, and $Z$ a finite set of places containing $S$. A Selmer condition $\mathcal{L}$ over the set of primes $Z$ is a collection of subspaces $\mathcal{L}_v\subset H^1(\op{G}_v, M)$ as $v$ ranges over $Z$. Let $\mathcal{L}_v^{\perp}\subset H^1(\op{G}_v, M^*)$ denote the annihilator of $\mathcal{L}_v$ w.r.t nondegenerate local pairing,
\[H^1(\op{G}_v, M)\times H^1(\op{G}_v, M^*)\rightarrow H^2(\op{G}_v, \F_q(1))\simeq \F_q.\] The Selmer group associated to $\mathcal{L}$ is 
\[H^1_{\mathcal{L}}(\G_{\Q,Z}, M):=\ker\left\lbrace H^1(\G_{\Q, Z}, M)\rightarrow \bigoplus_{v\in Z} \frac{H^1(\op{G}_v, M)}{\mathcal{L}_v}\right\rbrace\]
and the dual Selmer group is the Selmer group associated to the dual Selmer condition $\mathcal{L}^{\perp}$
\[H^1_{\mathcal{L}^{\perp}}(\G_{\Q,Z}, M^*):=\ker\left\lbrace H^1(\G_{\Q, Z}, M^*)\rightarrow \bigoplus_{v\in Z} \frac{H^1(\op{G}_v, M^*)}{\mathcal{L}^{\perp}_v}\right\rbrace.\]
 The Proposition below is due to Wiles and follows from the Poitou-Tate long exact sequence for finite modules $M$.
\begin{Th}\cite[Theorem 8.7.9]{NSW}\label{wilesprop}
Suppose $M$ is an $\F_q[\G_{\Q,S}]$ module with finite cardinality and $Z$ a finite set of primes which contains $S$. Let $\mathcal{L}$ be a Selmer condition on $Z\cup \{\infty\}$. The dimensions of the Selmer and dual Selmer groups are related as follows
\[\begin{split}& h^1_{\mathcal{L}}(\G_{\Q,Z}, M)-h^1_{\mathcal{L}^{\perp}}(\G_{\Q,Z}, M^*)\\
=&h^0(\G_{\Q},M)-h^0(\G_{\Q}, M^*)+\sum_{v\in Z\cup \{\infty\}}(\dim \mathcal{L}_v-h^0(\op{G}_v, M)).
\end{split}\]
\end{Th}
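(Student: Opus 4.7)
The plan is to derive the Wiles formula from the Poitou--Tate nine-term exact sequence combined with local Tate duality, following the standard route. First I would write down the initial portion of the Poitou--Tate exact sequence, namely
\[0 \to H^0(\G_{\Q,Z},M) \to \bigoplus_{v\in Z\cup\{\infty\}} H^0(\op{G}_v,M) \to H^2(\G_{\Q,Z},M^*)^{\vee} \to H^1(\G_{\Q,Z},M) \xrightarrow{\lambda} \bigoplus_{v} H^1(\op{G}_v,M) \xrightarrow{\mu} H^1(\G_{\Q,Z},M^*)^{\vee},\]
where $\lambda$ is the sum of restriction maps, $(-)^{\vee}=\Hom(-,\F_q)$ is the Pontryagin dual, and the image of $\lambda$ equals the kernel of $\mu$.

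Next I would splice in the local conditions. By definition, $H^1_{\mathcal{L}}(\G_{\Q,Z},M)=\lambda^{-1}\bigl(\bigoplus_v \mathcal{L}_v\bigr)$. Local Tate duality identifies $(H^1(\op{G}_v,M)/\mathcal{L}_v)^{\vee}$ with $\mathcal{L}_v^{\perp}\subset H^1(\op{G}_v,M^*)$, and therefore dualizing the defining sequence of $H^1_{\mathcal{L}^{\perp}}(\G_{\Q,Z},M^*)$ shows that the image of $\bigoplus_v H^1(\op{G}_v,M)/\mathcal{L}_v$ in $H^1(\G_{\Q,Z},M^*)^{\vee}$ is precisely $H^1_{\mathcal{L}^{\perp}}(\G_{\Q,Z},M^*)^{\vee}$. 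Combining these observations produces the Greenberg--Wiles four-term exact sequence
\[0 \to H^1_{\mathcal{L}}(\G_{\Q,Z},M) \to H^1(\G_{\Q,Z},M) \to \bigoplus_{v\in Z\cup\{\infty\}} H^1(\op{G}_v,M)/\mathcal{L}_v \to H^1_{\mathcal{L}^{\perp}}(\G_{\Q,Z},M^*)^{\vee} \to 0.\]

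To extract the dimension identity, I would form the alternating sum of dimensions in this four-term sequence, then replace $h^1(\G_{\Q,Z},M)$ by the expression forced on it by the first displayed Poitou--Tate piece, which reads
\[h^1(\G_{\Q,Z},M) = h^0(\G_{\Q,Z},M)-\sum_{v} h^0(\op{G}_v,M) + h^2(\G_{\Q,Z},M^*) + (\text{kernel of }\lambda\text{ corrections}).\]
Substituting and invoking Tate's global Euler characteristic formula for $M^*$ over $\G_{\Q,Z}$, together with Tate's local Euler characteristic formula at each finite $v\in Z$ (and the standard archimedean contribution at $v=\infty$), the $H^2$-terms and the $h^1(\op{G}_v,M)$-terms telescope against one another, leaving precisely
\[h^1_{\mathcal{L}}(\G_{\Q,Z},M)-h^1_{\mathcal{L}^{\perp}}(\G_{\Q,Z},M^*) = h^0(\G_{\Q},M)-h^0(\G_{\Q},M^*)+\sum_{v\in Z\cup\{\infty\}}\bigl(\dim \mathcal{L}_v - h^0(\op{G}_v,M)\bigr).\]

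The main obstacle is the careful bookkeeping in this final cancellation: one has to confirm that every term introduced by Tate's local Euler characteristic formula at the finite primes cancels exactly against a matching term coming from the Poitou--Tate sequence, and simultaneously track the archimedean place $v=\infty$ with the correct sign conventions (recalling that $\chi(\op{G}_\R,M)$ vanishes only after accounting for $h^0$ of the complex conjugation invariants). Once this bookkeeping is verified, the formula follows directly.
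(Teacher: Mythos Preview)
The paper does not prove this statement; it is quoted from \cite[Theorem~8.7.9]{NSW} and attributed to Wiles, with only the one-line remark that it ``follows from the Poitou--Tate long exact sequence for finite modules $M$.'' Your sketch is exactly this standard route, so there is no discrepancy of approach to discuss.

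One caution about the details: the four-term sequence you write is not exact at the right-hand end as stated. The map
\[
\bigoplus_{v} H^1(\op{G}_v,M)/\mathcal{L}_v \longrightarrow H^1_{\mathcal{L}^{\perp}}(\G_{\Q,Z},M^*)^{\vee}
\]
factors through the Poitou--Tate map $\mu$, whose image in $H^1(\G_{\Q,Z},M^*)^{\vee}$ is only the kernel of the next map into $H^2(\G_{\Q,Z},M)$, not all of $H^1(\G_{\Q,Z},M^*)^{\vee}$; so surjectivity onto $H^1_{\mathcal{L}^{\perp}}(M^*)^{\vee}$ is not automatic. The clean fix is not to truncate: continue the sequence with the remaining Poitou--Tate terms through $H^2(\G_{\Q,Z},M)\to\bigoplus_v H^2(\op{G}_v,M)\to H^0(\G_{\Q,Z},M^*)^{\vee}\to 0$, and take the alternating sum of dimensions over the full exact sequence. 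The global and local Euler characteristic formulas then collapse everything to the stated identity, just as you anticipate in your final paragraph.
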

 In the above formula, $\op{G}_{\infty}=\langle c\rangle$. The spaces $\mathcal{N}_{\infty}$ and $\tilde{\mathcal{N}}_{\infty}$ are set to be $0$. Recall that the dimension of $\tilde{\mathcal{N}}_v$ is $h^0(\op{G}_v, \Ad\bar{\rho})$ for $v\in S\backslash \{p\}$ and that by Proposition $\ref{KhareRamProp}$, the dimension of $\tilde{\mathcal{N}}_p$ is equal to $h^0(\op{G}_p, \Ad \bar{\rho})+2$. Since $\bar{\rho}$ is odd, it follows that $H^0(\op{G}_{\infty}, \Ad \bar{\rho})$ consists of diagonal matrices and hence is two dimensional. Observe that $h^0(\G_{\Q},\Ad \bar{\rho})=1$, $h^0(\G_{\Q}, \Ad \bar{\rho}^*)=0$ and
 \[\sum_{v\in S\cup \{\infty\}} (\dim \tilde{\mathcal{N}}_v-h^0(\op{G}_v, \Ad \bar{\rho}))=0.\]Therefore it is a consequence of Theorem $\ref{wilesprop}$ that
\begin{equation}\label{selmerdselmerdiff1}h^1_{\tilde{\mathcal{N}}}(\op{G}_{\Q,S}, \Ad \bar{\rho})-h^1_{\tilde{\mathcal{N}}^{\perp}}(\op{G}_{\Q,S}, \Ad \bar{\rho}^*)=1.\end{equation}

\section{Versal Deformations at Trivial Primes}\label{highlyversal}
\par In this section, we recall the deformation problems at trivial primes due to Hamblen-Ramakrishna. Throughout, we fix a lift $\kappa:\op{G}_{\Q,S}\rightarrow \GL_1(\mathcal{O})$ and study two types of deformation problems $\tilde{\mathcal{C}}_v : \mathcal{C}_{\mathcal{O}}\rightarrow \op{Sets}$, and $\mathcal{C}_v : \mathcal{C}_{\mathcal{O}}\rightarrow \op{Sets}$ for which the determinant is fixed to be $\kappa_v$ for the latter condition.
\begin{Def}
A prime $v$ is a trivial prime if
\begin{enumerate}
\item
$\G_v\subset \ker \bar{\rho}$,
\item
the prime $v\equiv 1\mod{p}$ but $v\not\equiv 1\mod{p^2}$.
\end{enumerate}

\end{Def}
The following fact is an easy consequence of local-duality and the local Euler-characteristic formula (cf. \cite{NSW}).
\begin{Fact}
For a trivial prime $v$, we have the following dimension calculations:\[
h^i(\op{G}_v, \Ad^0\bar{\rho})=\begin{cases}3\text{ if }i=0,2,\\
6\text{ if }i=1,\\
\end{cases} \text{and }\hspace{0.25 cm}
h^i(\op{G}_v, \Ad\bar{\rho})=\begin{cases}4\text{ if }i=0,2,\\
8\text{ if }i=1.\\
\end{cases}\]
All higher cohomology groups are zero.
\end{Fact}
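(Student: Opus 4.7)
The plan is to unpack the two defining properties of a trivial prime to make the Galois action as simple as possible, and then feed the result into local Tate duality and the local Euler characteristic formula.

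First I would observe that since $\G_v \subset \ker\bar{\rho}$, the adjoint Galois module $\Ad\bar{\rho}$ (and its submodule $\Ad^0\bar{\rho}$) restricts to $\G_v$ as the \emph{trivial} module. Hence
\[
h^0(\G_v, \Ad\bar{\rho}) = \dim_{\F_q}\Ad\bar{\rho} = 4, \qquad h^0(\G_v, \Ad^0\bar{\rho}) = \dim_{\F_q}\Ad^0\bar{\rho} = 3.
\]
Next I would use the condition $v \equiv 1 \pmod{p}$, which guarantees $\mu_p \subset \Q_v^{\times}$, so the mod-$p$ cyclotomic character is trivial on $\G_v$. Consequently $\F_q(1)_{\restriction \G_v}$ is a trivial $\G_v$-module of $\F_q$-dimension one, and for any trivial $\F_q[\G_v]$-module $M$ the Tate dual $M^* = \Hom(M,\F_q(1))$ is again trivial, noncanonically isomorphic to $M$ as a Galois module.

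With these two observations in hand, local Tate duality immediately yields
\[
h^2(\G_v,\Ad\bar{\rho}) = h^0(\G_v,\Ad\bar{\rho}^*) = h^0(\G_v,\Ad\bar{\rho}) = 4,
\]
and analogously $h^2(\G_v,\Ad^0\bar{\rho}) = 3$. Because $v \not= p$ (the congruence $v \equiv 1 \pmod p$ forces this), the local Euler characteristic formula reduces to $h^0 - h^1 + h^2 = 0$, so
\[
h^1(\G_v, \Ad\bar{\rho}) = 2h^0(\G_v,\Ad\bar{\rho}) = 8, \qquad h^1(\G_v, \Ad^0\bar{\rho}) = 2h^0(\G_v, \Ad^0\bar{\rho}) = 6.
\]
Finally, the vanishing of $H^i$ for $i \geq 3$ follows from the fact that $\G_v$ has $p$-cohomological dimension at most $2$ for $v$ a finite prime different from the residue characteristic (equivalently $\mathrm{cd}_p(\G_v) \leq 2$ for any local field of residue characteristic different from $p$).

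None of the steps looks genuinely hard; the only point that demands care is bookkeeping dimensions over $\F_q$ rather than $\F_p$, and making sure the duality statement and the Euler characteristic formula are invoked in their $\F_q$-linear forms (which are obtained from the standard $\F_p$-linear statements by $\F_q/\F_p$-scalar extension). Once that bookkeeping is right, the result is a mechanical combination of $h^0 = h^2$ (from self-duality of a trivial module when $\mu_p \subset \Q_v$) with $h^1 = h^0 + h^2$ (from the vanishing Euler characteristic at a prime of residue characteristic $\neq p$).
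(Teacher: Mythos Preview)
Your proposal is correct and matches exactly the approach the paper indicates: the paper states the Fact without proof, noting only that it ``is an easy consequence of local-duality and the local Euler-characteristic formula,'' which is precisely the combination of trivial $\G_v$-action (from $\G_v\subset\ker\bar\rho$), trivial cyclotomic character (from $v\equiv 1\pmod p$), Tate duality, and the Euler characteristic formula that you spell out.
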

\par Let $v$ be a trivial prime. Choose a square root of $\kappa(\sigma_v)v^{-1}$ in $\mathcal{O}$. Fix a basis with respect to which $\bar{\rho}$ is upper triangular. Let $v$ be a trivial prime, $\bar{\rho}_{\restriction G_v}$ is tamely ramified. Let $\sigma_v$ be a choice of Frobenius at $v$ and $\tau_v$ a generator of the maximal pro-$p$ tame inertia. The maximal pro-$p$ extension of $\Q_v$ is generated by $\sigma_v$ and $\tau_v$ which are subject to a single relation $\sigma_v \tau_v \sigma_v^{-1}=\tau_v^v$.
\begin{Def}\label{trivialdeformationconditions} We proceed to prescribe deformation problems at trivial primes.
\begin{itemize}
\item
Let $\mathcal{D}_v$ be the class of deformations of $\bar{\rho}_{\restriction G_v}$ containing representatives taking 
\[
\begin{split}
&\sigma_v\mapsto (\kappa(\sigma_v)v^{-1})^{1/2}\mtx{v}{x}{0}{1}\\
&\tau_v\mapsto \mtx{1}{y}{0}{1}
\end{split}\]
with respect to a basis lifting $\mathscr{B}$ and insist that $p^2$ divides $x$.
\item Separate $\mathcal{D}_v$ into two classes, the first class of deformations are those that are ramified modulo $p^2$, i.e. $\mathcal{D}_v^{\operatorname{ram}}$ consists of those deformations for which $p$ divides $y$ but $p^2$ does not divide $y$. Those that are unramified modulo $p^2$ are denoted $\mathcal{D}_v^{\operatorname{nr}}$. \item We let $\tilde{\mathcal{D}}_v$ be the deformations obtained as unramified central twists of those in $\mathcal{D}_v$. In other words, $\tilde{\mathcal{D}}_v$ are those deformations with representative taking 
\[\sigma_v\mapsto z\mtx{v}{x}{0}{1}\\
\text{ and }\tau_v\mapsto \mtx{1}{y}{0}{1}\]
where $z\equiv 1 \mod{p}$, $x\equiv 0 \mod{p^2}$. We emphasize that the requirement that $x\equiv 0\mod{p^2}$ is an additional assumption in our setting.
\end{itemize}
\end{Def}
 The space $\mathcal{N}_v$ has dimension equal to $h^0(\G_v, \Ad^0\bar{\rho})=3$. This facilitates for a balanced Selmer condition, i.e. \[h^1_{\mathcal{N}}(\op{G}_{\Q,S\cup X}, \Ad^0\bar{\rho})=h^1_{\mathcal{N}^{\perp}}(\op{G}_{\Q,S\cup X}, \Ad^0\bar{\rho}^*).\]
\begin{Cor}\label{SelmerdualSelmer1}
Let $\bar{\rho}$ be subject to the conditions enumerated in Theorem $\ref{main}$. Let $X$ be a finite set of trivial primes disjoint from $S$. Then
\[h^1_{\tilde{\mathcal{N}}}(\op{G}_{\Q,S\cup X}, \Ad \bar{\rho})-h^1_{\tilde{\mathcal{N}}^{\perp}}(\op{G}_{\Q,S\cup X}, \Ad \bar{\rho}^*)=1.\]
\end{Cor}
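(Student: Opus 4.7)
The plan is to apply Wiles' formula (Theorem \ref{wilesprop}) with $Z = S \cup X$ and $M = \Ad\bar{\rho}$, and to compare the resulting identity with the $Z = S$ version already established in \eqref{selmerdselmerdiff1}. The right-hand sides of these two Wiles identities differ only by the extra local contributions $\sum_{v \in X} \bigl(\dim \tilde{\mathcal{N}}_v - h^0(\G_v, \Ad\bar{\rho})\bigr)$ coming from the trivial primes in $X$. So the corollary will reduce to checking that each such local term vanishes.

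To verify this balancedness, I would argue as follows. At a trivial prime $v$ the restriction $\bar{\rho}_{\restriction \G_v}$ is trivial, so $h^0(\G_v, \Ad\bar{\rho}) = 4$. On the other hand, by the discussion preceding the corollary the fixed-determinant tangent space has $\dim \mathcal{N}_v = h^0(\G_v, \Ad^0\bar{\rho}) = 3$, and $\tilde{\mathcal{N}}_v$ is obtained from $\mathcal{N}_v$ by adjoining the unramified central twists, in line with Fact \ref{tildeNdef}; that is, $\tilde{\mathcal{N}}_v = \mathcal{N}_v \oplus H^1_{\operatorname{nr}}(\G_v, \F_q \cdot \operatorname{Id})$. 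Since $H^1_{\operatorname{nr}}(\G_v, \F_q)$ is one-dimensional, I obtain $\dim \tilde{\mathcal{N}}_v = 4 = h^0(\G_v, \Ad\bar{\rho})$, exactly as required.

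Substituting into Wiles' formula then gives
$$h^1_{\tilde{\mathcal{N}}}(\G_{\Q,S\cup X}, \Ad\bar{\rho}) - h^1_{\tilde{\mathcal{N}}^{\perp}}(\G_{\Q,S\cup X}, \Ad\bar{\rho}^*) = h^0(\G_{\Q}, \Ad\bar{\rho}) - h^0(\G_{\Q}, \Ad\bar{\rho}^*) + 0 = 1 - 0 = 1,$$
since the $v \in S \cup \{\infty\}$ contribution was already shown to vanish in the derivation of \eqref{selmerdselmerdiff1}. I do not anticipate any genuine obstacle: the corollary is essentially a bookkeeping consequence of the fact that the tangent space $\tilde{\mathcal{N}}_v$ at a trivial prime has been deliberately chosen to make the local Selmer datum balanced. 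The only point that needs care is confirming the dimension $\dim \mathcal{N}_v = 3$ from the explicit versal description in Definition \ref{trivialdeformationconditions}, which is recorded in the paragraph immediately before the corollary.
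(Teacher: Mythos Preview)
Your proposal is correct and follows essentially the same approach as the paper: both apply Wiles' formula (Theorem~\ref{wilesprop}) and verify that the local contribution $\dim\tilde{\mathcal{N}}_v - h^0(\G_v,\Ad\bar{\rho})$ vanishes at each trivial prime $v\in X$. The only cosmetic difference is that you invoke the already-established identity \eqref{selmerdselmerdiff1} to handle the $S\cup\{\infty\}$ terms, whereas the paper re-lists all the local dimensions and redoes the full summation; your shortcut is perfectly legitimate and your justification that $\dim\tilde{\mathcal{N}}_v = 3+1 = 4 = h^0(\G_v,\Ad\bar{\rho})$ at trivial primes is exactly what the paper asserts (without spelling out the decomposition $\tilde{\mathcal{N}}_v = \mathcal{N}_v \oplus H^1_{\operatorname{nr}}(\G_v,\F_q\cdot\operatorname{Id})$ as you do).
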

\begin{proof}
The dimension of the local tangent spaces at $p$ for the full-adjoint deformation problems are as follows, 
\[
\begin{split}
& \dim \tilde{\mathcal{N}}_{\infty}=0\\
&\dim \tilde{\mathcal{N}}_p=h^0(\G_p, \Ad\bar{\rho})+2\\
&\dim \tilde{\mathcal{N}}_v=h^0(\op{G}_v, \Ad\bar{\rho}) \text{ for $v\in S\backslash \{p\}$}\\
&\dim \tilde{\mathcal{N}}_v=h^0(\op{G}_v, \Ad\bar{\rho})\text{ for $v\notin S$ a trivial prime.}
\end{split}\] An application of Wiles' formula yields
\begin{equation}\label{SelmerDSelmer}
\begin{split}
&h^1_{\tilde{\mathcal{N}}}(\op{G}_{\Q,S\cup X}, \Ad \bar{\rho})-h^1_{\tilde{\mathcal{N}}^{\perp}}(\op{G}_{\Q,S\cup X}, \Ad \bar{\rho}^*)\\
&=h^0(\G_{\Q}, \Ad\bar{\rho})-h^0(\G_{\Q}, \Ad\bar{\rho}^*)+\sum_{v\in S\cup X\cup\{\infty\}}\left(\dim \tilde{\mathcal{N}}_v-h^0(\op{G}_v, \Ad\bar{\rho})\right)\\
&=1+\sum_{v\in S\cup X\cup\{\infty\}}\left(\dim \tilde{\mathcal{N}}_v-h^0(\op{G}_v, \Ad\bar{\rho})\right)\\
&=3-h^0(\op{G}_{\infty}, \Ad\bar{\rho})\\
&=1.
\end{split}
\end{equation}

\end{proof}
\par The key observation of Hamblen and Ramakrishna is that one may allow ramification at a number of trivial primes $X_1$ disjoint from $S$ so as to lift $\bar{\rho}$ to an irreducible mod $p^3$ representation $\rho_3$ satisfying the local constraints at the set $S$ and the set of trivial primes $X_1$. At this stage, the versal deformation functors at trivial primes play the exact role of the deformation conditions at nice primes. It becomes possible to adapt Ramakrishna's lifting argument from \cite{RamakrishnaFM}, namely, allow ramification at a finite set of trivial primes $X\supseteq X_1$ so the dual Selmer group $H^1_{\mathcal{N}^{\perp}}(\G_{\Q,S\cup X}, \Ad^0\bar{\rho}^*)=0$. The Galois representation $\rho_3$ lifts to a characteristic zero representation $\rho:\G_{\Q,S\cup X}\rightarrow \GL_2(\mathcal{O})$ which is irreducible and odd. It follows from the results of Skinner and Wiles \cite{skinnerwiles} that $\rho$ arises from a Hecke eigencuspform. The reader may also refer to the treatment in \cite{ray3}, which discusses Ramakrishna's lifting construction in the residually reducible case.
\par The two classes of deformations (ramified mod $p^2$ and unramified mod $p^2$) play different roles in the deformation theoretic arguments. Trivial primes at which deformations are ramified modulo $p^2$ are used to lift $\bar{\rho}$ to a mod $p^3$ representation $\rho_3$ satisfying local constraints. The trivial primes at which deformations are unramified modulo $p^2$ are used to kill the dual Selmer group and thereby lift $\rho_3$ to a characteristic zero rerpesentation. The reader is not required to fully understand the role of the two choices of deformation problems to understand and appreciate the arguments in this manuscript.

We recall some notation from \cite[section 4]{hamblenramakrishna}.
\begin{Def}
Set
\[
\begin{split}
& f_1(\sigma_v)=\mtx{0}{1}{0}{0} \text{, }f_1(\tau_v)=\mtx{0}{0}{0}{0},\\ &
f_2(\sigma_v)=\mtx{0}{0}{0}{0} \text{, }f_2(\tau_v)=\mtx{0}{1}{0}{0},\\ & g^{\operatorname{nr}}(\sigma_v)=\mtx{0}{0}{1}{0} \text{, }g^{\operatorname{nr}}(\tau_v)=\mtx{0}{0}{0}{0},\\ & g^{\operatorname{ram}}(\sigma_v)=\mtx{0}{0}{1}{0} \text{, }g^{\operatorname{ram}}(\tau_v)=\mtx{-\frac{y}{v-1}}{0}{0}{\frac{y}{v-1}}.
\end{split}\]
The space $\mathcal{Q}_v$ denotes the subspace of $H^1(G_v, \Ad^0\bar{\rho})$ spanned by $f_1$ and $f_2$, let $\mathcal{P}_v^{\operatorname{nr}}$ the subspace spanned by $f_1,f_2$ and $g^{\operatorname{nr}}$ and $\mathcal{P}_v^{\operatorname{ram}}$ the subspace spanned by $f_1,f_2$ and $g^{\operatorname{ram}}$.
\end{Def}
\begin{Def} 
\begin{enumerate}
\item
For trivial primes $v$ whose mod $p^2$ deformations are unramified, set $\mathcal{M}_v$, $\mathcal{N}_v$ and $\mathcal{C}_v$ to consist of conjugates by $\mtx{1}{0}{1}{1}$ of elements of $\mathcal{Q}_v$, $\mathcal{P}_v^{\operatorname{nr}}$ and $\mathcal{D}_v^{\operatorname{nr}}$ respectively.
\item
For trivial primes $v$ whose mod $p^2$ deformations are ramified, set $\mathcal{M}_v$, $\mathcal{N}_v$ and $\mathcal{C}_v$ consist of conjugates by $\mtx{0}{1}{1}{0}$ of elements of $\mathcal{Q}_v$, $\mathcal{P}_v^{\operatorname{ram}}$ and $\mathcal{D}_v^{\operatorname{ram}}$ respectively.
\item In both cases, let $\tilde{\mathcal{N}}_v$, $\tilde{\mathcal{M}}_v$ and $\tilde{\mathcal{C}}_v$ be the unramified central twists of ${\mathcal{N}}_v$, ${\mathcal{M}}_v$ and ${\mathcal{C}}_v$ respectively.
\end{enumerate}
\end{Def}

\begin{Def}\label{decreasingideals}
A ring $R\in \Cfh$ with maximal ideal $\mathfrak{m}_R$ is endowed with a decreasing chain of ideals $\{\mathfrak{n}_k\}_{k\geq 1}$ defined by
$\mathfrak{n}_k:=pR\cap\mathfrak{m}_R^k$. In dealing with more than one ring $R$ we will use $\mathfrak{n}_k(R)$ instead of $\mathfrak{n}_k$.
\end{Def}
\begin{Prop}\label{furtherdetailed}
Let $R\in \mathfrak{C}$ a coefficient ring with maximal ideal $\mathfrak{m}$ ( recall that it is stipulated that $p\notin \mathfrak{m}^2$). For both ramified and unramified mod $p^2$ deformations, the space $\mathcal{N}_v\otimes \mathfrak{n}_k/\mathfrak{n}_{k+1}$ preserves $\mathcal{C}_v(R/\mathfrak{n}_{k+1})$ for $k\geq 2$.
\end{Prop}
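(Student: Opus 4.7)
The plan is to verify the claim by explicit matrix computation along a basis of $\mathcal{N}_v$. First I would reduce to the case before outer conjugation and central twisting: since $\mathcal{N}_v$ and $\mathcal{C}_v$ are obtained from $\mathcal{P}_v^{\ast}$ and $\mathcal{D}_v^{\ast}$ (with $\ast\in\{\operatorname{nr},\operatorname{ram}\}$ matching the case) by a common outer conjugation followed by an unramified central twist, and both operations commute with cocycle twisting, it suffices to prove that $\mathcal{P}_v^{\ast}\otimes\mathfrak{n}_k/\mathfrak{n}_{k+1}$ preserves $\mathcal{D}_v^{\ast}(R/\mathfrak{n}_{k+1})$ for $k\ge 2$. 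The twist $\varrho'(g):=(\operatorname{Id}+X(g)\,t)\varrho(g)$ is a well-defined representation modulo $\mathfrak{n}_{k+1}$ because $\mathfrak{n}_k\cdot\mathfrak{n}_k\subseteq pR\cap\mathfrak{m}^{k+1}=\mathfrak{n}_{k+1}$, so that $t^2=0$ in $R/\mathfrak{n}_{k+1}$ kills all higher-order terms and the cocycle identity suffices to preserve the group relation $\sigma_v\tau_v\sigma_v^{-1}=\tau_v^v$.

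Next I would compute, for a representative $\varrho(\sigma_v)=c\,\mtx{v}{x}{0}{1}$, $\varrho(\tau_v)=\mtx{1}{y}{0}{1}$, the effect of the twist for each basis cocycle. For $X=f_1$, only the entry $x$ changes, by $x\mapsto x+t$; since $x\in\mathfrak{n}_2$ by hypothesis and $t\in\mathfrak{n}_k\subseteq\mathfrak{n}_2$ for $k\ge 2$, the condition is preserved. For $X=f_2$, the entry $y$ shifts by $t\in\mathfrak{n}_2$, compatibly with both $y\in\mathfrak{n}_2$ (unramified case) and $y\in\mathfrak{n}_1\setminus\mathfrak{n}_2$ (ramified case). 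For $X=g^{\operatorname{nr}}$ or $g^{\operatorname{ram}}$, the twist inserts a lower-left perturbation into $\varrho(\sigma_v)$ and, in the ramified case, perturbs the diagonal of $\varrho(\tau_v)$; so the prescribed matrix form must be restored by conjugating the twisted representation.

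To restore the form I would conjugate by $W=\operatorname{Id}+s\,e_{21}$ with $s\in\mathfrak{m}^{k-1}$ chosen to solve $s(v-1)=-tv\pmod{\mathfrak{n}_{k+1}}$; this is solvable because $v-1=pu$ for a unit $u\in R^{\times}$ (since $v$ is a trivial prime with $v\not\equiv 1\pmod{p^2}$) and $t\in pR$ allows factoring out $p$. The main technical obstacle is then to check that this conjugation does not destroy the remaining conditions modulo $\mathfrak{n}_{k+1}$: every error term introduced is a product in $\mathfrak{m}^{k-1}\cdot\mathfrak{n}_j$ with $j\ge 2$, and $\mathfrak{m}^{k-1}\cdot\mathfrak{n}_j\subseteq pR\cap\mathfrak{m}^{k+j-1}\subseteq\mathfrak{n}_{k+1}$. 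This is precisely where the hypothesis $k\ge 2$ is indispensable, and clarifies the obstruction behind \cite[Corollary 25]{hamblenramakrishna}: if $k=1$, then $t$ is only constrained to lie in $\mathfrak{n}_1=pR$, which need not be contained in $\mathfrak{n}_2$, and already the $f_1$-twist would violate $x\in\mathfrak{n}_2$.
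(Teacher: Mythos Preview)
Your approach is exactly the paper's: reduce to $\mathcal{P}_v^{\ast}$ and $\mathcal{D}_v^{\ast}$, handle $f_1,f_2$ by inspection, and undo the $g^{\ast}$-twist by conjugating with a lower unipotent $W=\operatorname{Id}+s\,e_{21}$, using that $p/(v-1)$ is a unit. One point of presentation: the paper does not try to ``restore the form'' but rather checks that $W^{-1}(\text{twisted }\varrho_k)W$ equals the \emph{original} $\varrho_k$ modulo $\mathfrak{n}_{k+1}$; this sidesteps any worry in the ramified case, where after conjugation $\tau_v$ is no longer unipotent and so is not literally of the prescribed shape, yet lies in the same deformation class.

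There is one genuine bookkeeping slip. Your assertion $s\in\mathfrak{m}^{k-1}$ is neither justified nor true in general: from $t\in\mathfrak{n}_k=pR\cap\mathfrak{m}^k$ one can write $t=pr$, but $pr\in\mathfrak{m}^k$ does \emph{not} force $r\in\mathfrak{m}^{k-1}$ in an arbitrary $R\in\mathfrak{C}$ (the associated graded need not be a domain). The paper extracts only $r\in\mathfrak{m}$, arguing that $p\notin\mathfrak{m}^2$ gives $p\notin\mathfrak{n}_k$, hence $r$ is a non-unit. The error estimate then goes through because the cross terms are not merely $s\cdot(\text{element of }\mathfrak{n}_j)$ but rather unit multiples of $t\cdot(\text{element of }\mathfrak{m})$: for instance $s x = \dfrac{tv}{v-1}\,x$ with $x\in p^2R$ rewrites as $p\cdot t\cdot(\text{element of }R)$, and $p\,\mathfrak{n}_k\subseteq\mathfrak{n}_{k+1}$. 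So your inclusion $\mathfrak{m}^{k-1}\cdot\mathfrak{n}_j\subseteq\mathfrak{n}_{k+1}$ should be replaced by $\mathfrak{m}\cdot\mathfrak{n}_k\subseteq\mathfrak{n}_{k+1}$ applied to terms of the form $t\cdot(\text{element of }\mathfrak{m})$; with that fix your argument and the paper's coincide.
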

\begin{proof}
\par Let $k\geq 2$ and $\varrho_k\in \mathcal{C}_v(R/\mathfrak{n}_{k+1})$ and let $pr\in \mathfrak{n}_k$. Consider the case when $p^2$ divides $y$. Since $k\geq 2$ the assumption $p\notin \mathfrak{m}^2$ implies that $p\notin \mathfrak{n}_k$ and consequently, $r\in \mathfrak{m}$. Let \[\varrho_k':=\exp(g^{\operatorname{nr}}\otimes pr)\varrho_k=(\op{Id}+prg^{\operatorname{nr}})\varrho_k\] we show that $\varrho_k'$ belongs to $\mathcal{C}_v(R/\mathfrak{n}_{k+1})$.
\par We recall that \[\varrho_k(\sigma_v)=(\kappa(\sigma_v) v^{-1})^{1/2}\mtx{v}{x}{0}{1}.\] It is easy to see that $p\mathfrak{n}_k\subseteq \mathfrak{n}_{k+1}$. Since $v$ is $1$ mod $p$ and $x\in \mathfrak{m}$, 
\[
\begin{split}&(\op{Id} + prg^{\operatorname{nr}})\varrho_k(\sigma_v)=(\kappa(\sigma_v) v^{-1})^{1/2}\mtx{v}{x}{pr}{1} \\
&(\op{Id} + pr g^{\operatorname{nr}})\varrho_k(\tau_v)=\mtx{1}{y}{0}{1}.\end{split}
\]
Since $v\not\equiv 1 \mod{p^2}$, we have that $\frac{p}{v-1}$ is a unit. Since the element $x$ is divisible by $p^2$ we have that $\frac{xpr}{v-1}\in \mathfrak{n}_{k+1}$. We see that
\begin{equation}\label{unchanged}\left(\op{Id} +r\mtx{0}{0}{\frac{p}{v-1}}{0}\right)^{-1}(\kappa(\sigma_v) v^{-1})^{1/2}\mtx{v}{x}{pr}{1}\left(\op{Id} +r\mtx{0}{0}{\frac{p}{v-1}}{0}\right)\end{equation}
\[=(\kappa(\sigma_v) v^{-1})^{1/2}\mtx{v}{x}{0}{1}=\varrho_k(\sigma)\]
and as $p^2$ divides $y$, the element $\frac{pry}{v-1}\in \mathfrak{n}_{k+1}$ and as a consequence,
\[\left(\op{Id} +r\mtx{0}{0}{\frac{p}{v-1}}{0}\right)^{-1}\mtx{1}{y}{0}{1}\left(\op{Id} +r\mtx{0}{0}{\frac{p}{v-1}}{0}\right)\]
\[=\mtx{1}{y}{0}{1}=\varrho_k(\tau).\]
\par 
Next consider deformations for which $p^2$ does not divide $y$. The conclusion of $\ref{unchanged}$ remains unchanged as $g^{\operatorname{ram}}(\sigma_v)=g^{\operatorname{nr}}(\sigma_v)$. We observe that
\[\left(\op{Id} +r\mtx{0}{0}{\frac{p}{v-1}}{0}\right)\varrho_k(\tau)\left(\op{Id} +r\mtx{0}{0}{\frac{p}{v-1}}{0}\right)^{-1}\]\[=(\op{Id} + pr g^{\operatorname{ram}})\varrho_k(\tau)=\mtx{1-\frac{pr y}{v-1}}{y}{0}{1+\frac{pr y}{v-1}}\] (unlike in the case for which $p^2$ divides $y$ this matrix need not be unipotent).
The case for $f_1$ and $f_2$ follows similarly.
\end{proof}

\section{A Purely Galois Theoretic Lifting Construction}\label{section4}
\par In \cite[section 5]{hamblenramakrishna}, it is shown that there exists a finite set of trivial primes $X_1$, that are disjoint from $S$ such that $\bar{\rho}$ lifts to an irreducible mod $p^3$ representation 
\[\rho_3:\G_{\Q,S\cup X_1}\rightarrow \GL_2(\mathcal{O}/p^3)\] which satisfies the liftable local deformation problems $\mathcal{C}_v$ at each prime $v\in S\cup X_1$. The set of trivial primes $X_1$ in particular has the property that $\Sh^2_{S\cup X_1}(\Ad^0\bar{\rho})=0$. It is then shown that the set of primes $X_1$ may be further extended to a finite set of trivial primes $X$ containing $X_1$ such that $h^1_{\mathcal{N}^{\perp}}(\op{G}_{\Q,S\cup X}, \Ad^0 \bar{\rho}^*)=0$. Denote by $\Phi$ the collection of versal deformation problems 
$\Phi=\{\Cv\}_{v\in S\cup X}$. Denote by $\rho_2:=\rho_3\mod{p^2}$.

\begin{Def}\label{DefDef}
For $R\in \mathfrak{C}$, let $\rho_{2,R}$ denote the deformation with image in $\GL_2(R/\mathfrak{n}_2)$ induced from $\rho_2$ by the structure map $\mathcal{O}\rightarrow R$. Let $\Df:\mathfrak{C}\rightarrow \operatorname{Sets}$ be the functor such that $\Df(R)$ consists of deformations $\rho_R:\G_{\Q,S\cup X}\rightarrow \GL_2(R)
$ for which
\[\rho_{2,R}= \rho_R\mod{\mathfrak{n}_2}.\]
\end{Def}
\begin{Def}
Let $R\in \Cfh$ have maximal ideal $\mathfrak{m}_R$ and let $J$ an ideal in $R$. The map of coefficient rings $R\rightarrow R/J$ is said to be \textit{nearly small} $\mathfrak{m}_RJ=0$. Recall that it is said to be \textit{small} if it is nearly small and if $J$ is principal.
\end{Def}
Let $R\rightarrow R/J$ be nearly small and $\mathfrak{m}_R$ be the maximal ideal of $R$. Since there is a unique isomorphism $R/\mathfrak{m}_R\simeq \F_q$ of $\mathcal{O}$-algebras, we simply identify the residue field $R/\mathfrak{m}_R$ with $\F_q$. Recall that by definition, $\mathfrak{m}_RJ=0$. For $a\in \F_q$ and $j\in J$, let $aj$ denote $\tilde{a}j$ where $\tilde{a}\in R$ is a lift of $a$. Note that $aj$ is well defined since $\tilde{a}j$ is independent of the choice of lift $\tilde{a}$. Therefore $J$ is a vector space over $\F_q$. Associate to a pure tensor 
\[\mtx{a}{b}{c}{d}\otimes j\in \Ad \bar\rho\otimes_{\F_q} J,\] the matrix 
\[\op{Id}+\mtx{aj}{bj}{cj}{dj}=\mtx{1+aj}{bj}{cj}{1+dj},\] which is in the kernel of the reduction map $\op{GL}_2(R)\rightarrow \op{GL}_2(R/J)$.
\par Note that $J^2=0$ since $J$ is contained in $\mathfrak{m}_R$ and $\mathfrak{m}_RJ=0$. Consider two pure tensors
\[\mtx{a_1}{b_1}{c_1}{d_1}\otimes j_1 \text{ and }\mtx{a_2}{b_2}{c_2}{d_2}\otimes j_2.\] From the relation $j_1j_2=0$, we have that
\[\left(\op{Id}+\mtx{a_1j_1}{b_1j_1}{c_1j_1}{d_1j_1}\right)\left(\op{Id}+\mtx{a_2j_2}{b_2j_2}{c_2j_2}{d_2j_2}\right)\]\[=\op{Id}+\mtx{a_1j_1+a_2j_2}{b_1j_1+b_2j_2}{c_1j_1+c_2j_2}{d_1j_1+d_2j_2}.\]Therefore we have an isomorphism (of groups)
\[\Ad \bar{\rho}\otimes_{\F_q} J\simeq \op{ker}\left\{\op{GL}_2(R)\rightarrow \op{GL}_2(R/J)\right\}.\]
\begin{Remark}\label{boringremark}The tensor product $\Ad \bar{\rho} \otimes_{\F_q} J$ is a $\op{G}_v$-module, via the adjoint action on $\Ad \bar{\rho}$ and the trivial action on $J$. Choosing an $\F_q$-basis of $J$, we find that as a $\op{G}_v$-module, the tensor product decomposes into a direct sum of copies of $\Ad \bar{\rho}$. As a result, $H^i(\op{G}_v, \Ad \bar{\rho} \otimes_{\F_q} J)$ is identified with $H^i(\op{G}_v, \Ad \bar{\rho}) \otimes_{\F_q} J$ for all $i\geq 0$.
\end{Remark}
\begin{Lemma}Let $R\rightarrow R/J$ be \textit{nearly small} and $v\in S$. The exponential map of a pure tensor \[X\otimes j\in H^1(\op{G}_v, \Ad\bar{\rho})\otimes_{\F_q} J\] takes $\varrho\in \Cv(R)$ to \[\exp(X\otimes j)\varrho:=(\operatorname{Id}+X\otimes j)\varrho.\]Refer to $\exp(X\otimes j)\varrho$ the \textit{twist} of $\varrho$ by $X\otimes j$. This gives a well defined action of $H^1(\G_v, \Ad\bar{\rho})\otimes_{\F_q} J$ on $\Cv(R)$ and the fibers of the reduction map \[\Cv(R)\rightarrow \Cv(R/J)\] are $H^1(\G_v, \Ad\bar{\rho})\otimes_{\F_q} J$ pseudotorsors.
\end{Lemma}
\begin{proof}
 Let \[X\otimes j\in H^1(\op{G}_v, \Ad\bar{\rho})\otimes_{\F_q} J\] be a pure tensor, $\varrho\in \Cv(R)$ and denote by $\varrho_0:\op{G}_v\rightarrow \op{GL}_2(R/J)$ the reduction of $\varrho$ modulo $J$. Let $\hat{X}$ be a cocycle representing the cohomology class $X$ and choose a representative $\hat{\varrho}$ for $\varrho$. Set \[\hat{\varrho}':\op{G}_v\rightarrow \op{GL}_2(R)\] to be equal to the twist
 \[\hat{\varrho}':=\left(\operatorname{Id}+\hat{X}\otimes j\right)\hat{\varrho}.\] Since $\hat{X}$ is a cocycle, it follows that $\hat{\varrho}'$ is a homomorphism. In greater detail, for $g_1, g_2\in \op{G}_v$, we have that
 \[\begin{split}
      \hat{\varrho}'(g_1)\hat{\varrho}'(g_2)= & \left(\operatorname{Id}+\hat{X}(g_1)\otimes j\right)\hat{\varrho}(g_1)\left(\operatorname{Id}+\hat{X}(g_2)\otimes j\right)\hat{\varrho}(g_2)\\
      = & \left(\operatorname{Id}+\hat{X}(g_1)\otimes j\right)\hat{\varrho}(g_1)\left(\operatorname{Id}+\hat{X}(g_2)\otimes j\right)\hat{\varrho}(g_1)^{-1}\hat{\varrho}(g_1g_2)\\
      =& \left(\operatorname{Id}+\hat{X}(g_1)\otimes j\right)\left(\operatorname{Id}+\left(\bar{\rho}_{\restriction v}(g_1)\hat{X}(g_2)\bar{\rho}_{\restriction v}(g_1)^{-1}\right)\otimes j\right)\hat{\varrho}(g_1g_2)\\
      =& \left(\operatorname{Id}+\hat{X}(g_1)\otimes j\right)\left(\operatorname{Id}+(g_1\hat{X})(g_2)\otimes j\right)\hat{\varrho}(g_1g_2)\\
      =& \left(\operatorname{Id}+\left(\hat{X}(g_1)+(g_1\hat{X})(g_2)\right)\otimes j\right)\hat{\varrho}(g_1g_2)\\
      =& \hat{\varrho}'(g_1g_2),
 \end{split}\] where $\bar{\rho}_{|v}$ is an abbreviation for $\bar{\rho}_{\restriction \op{G}_v}$. It is easy to see that if $\hat{X}$ is a coboundary, then $\hat{\varrho}'$ is strictly equivalent to $\hat{\varrho}$ (in the sense of Definition $\ref{stricteq}$). Therefore, this gives a well defined deformation 
 \[\varrho':=\left(\op{Id}+X\otimes j\right) \varrho\] of $\bar{\rho}_{\restriction \op{G}_v}$. Furthermore, $\varrho'$ is in fact a deformation of $\varrho_0$. This is because $\left(\op{Id}+X\otimes j\right)$ reduces to $\op{Id}$ and $\varrho$ reduces to $\varrho_0$ modulo $J$. Therefore, there is an action of $H^1(\op{G}_v, \Ad \bar{\rho})\otimes_{\F_q} J$ on the fibres of the reduction map \[\Cv(R)\rightarrow \Cv(R/J).\]
 \par We show that each fibre is an $H^1(\op{G}_v, \Ad \bar{\rho})\otimes_{\F_q} J$-pseudotorsor. This means that either the set of lifts $r\in \Cv(R)$ of $r_0\in \Cv(R)$ is empty, or the set of lifts is in bijection with $H^1(\op{G}_v, \Ad \bar{\rho})\otimes_{\F_q} J$. Assume without loss of generality that there exists a lift $r$ of $r_0$ and let $r'$ be any other lift. It suffices to show that there is a class $Y\in H^1(\op{G}_v, \Ad \bar{\rho})\otimes_{\F_q} J$ such that 
 \[r'=\left(\op{Id}+Y\right)r.\] It is easy to see that if such a class $Y$ exists then it is necessarily unique. Recall that the kernel of the reduction map 
 \[\op{GL}_2(R)\rightarrow \op{GL}_2(R/J)\]is identified with $\Ad\bar{\rho}\otimes_{\F_q} J$. For $g\in \op{G}_v$, set 
 \[\hat{Y}(g):=r'(g)r(g)^{-1}-\op{Id}\in \Ad\bar{\rho}\otimes_{\F_q} J,\] where we choose representatives for $r$ and $r'$.
 For $g_1, g_2\in \op{G}_v$, 
 \[\begin{split}r'(g_1)r'(g_2)=&\left(\op{Id}+\hat{Y}(g_1)\right)r(g_1)\left(\op{Id}+\hat{Y}(g_2)\right)r(g_2)\\ =& \left(\op{Id}+\hat{Y}(g_1)+g_1\hat{Y}(g_2)\right)r(g_1 g_2),
 \\ r'(g_1g_2)=&\left(\op{Id}+\hat{Y}(g_1g_2)\right)r(g_1 g_2),\end{split}\] and since $r'(g_1g_2)=r'(g_1)r'(g_2)$ it follows that 
 $\hat{Y}$ is a cocyle. We set $Y$ to be the associated cohomology class in $H^1(\op{G}_v, \Ad \bar{\rho} \otimes_{\F_q} J)$. Note $H^1(\op{G}_v, \Ad \bar{\rho} \otimes_{\F_q} J)$ is identified with $H^1(\op{G}_v, \Ad \bar{\rho}) \otimes_{\F_q} J$ (see Remark $\ref{boringremark}$). We have thus shown that there is a well defined class $Y$ such that $r'=(\op{Id}+Y)r$. This completes the proof.
\end{proof}

\begin{Remark}\label{Remarkonnk}
Let $R\in \mathfrak{C}$, \begin{enumerate}
\item\label{Remarkonnk1} $\mathfrak{n}_1=pR$ and that $\mathfrak{n}_k/\mathfrak{n}_{k+1}$ is an $R/\mathfrak{m}\simeq \F_q$ vector space.
\item\label{Remarkonnk2} If $R$ is a power series ring $R=\mathcal{O}[[U_1,\dots, U_s]]$ and $k\geq 1$, the quotient $\mathfrak{n}_k/\mathfrak{n}_{k+1}$ is an $\F_q$ vector space with basis representatives $[p^{k-b} U_1^{a_1}\dots U_s^{a_s}]\in \mathfrak{n}_k/\mathfrak{n}_{k+1}$ for $a_1,\dots, a_s,b\geq 0,b=a_1+\dots +a_s<k$, for instance, \[\mathfrak{n}_1/\mathfrak{n}_2=\F_q [p],\]
\[\mathfrak{n}_2/\mathfrak{n}_3=\F_q [p^2]\oplus \F_q [pU_1]\oplus\dots \oplus \F_q [pU_s].\]
\end{enumerate}
\end{Remark}

\begin{Lemma}\label{lemmadualSelmervanishing}
Let $X$ be the set of primes chosen as indicated at the start of this section, such that the Selmer and dual Selmer groups $H^1_{\mathcal{N}}(\G_{\Q,S\cup X}, \Ad^0\bar{\rho})$ and $H^1_{\mathcal{N}^{\perp}}(\G_{\Q,S\cup X}, \Ad^0\bar{\rho}^*)$ are both zero. Then
\begin{enumerate}
    \item $h^1_{\tilde{\mathcal{N}}}(\G_{\Q,S\cup X}, \Ad\bar{\rho})=1$
 and $h^1_{\tilde{\mathcal{N}}^{\perp}}(\G_{\Q,S\cup X}, \Ad\bar{\rho}^*)=0$,
    \item \[\Sh^2_{S\cup X}(\Ad\bar{\rho}):=\op{ker}\left( H^2(\op{G}_{\Q,S\cup X}, \Ad \bar{\rho})\rightarrow \bigoplus_{v\in S\cup X} H^2(\op{G}_v, \Ad \bar{\rho})\right)\] is equal to zero.
\end{enumerate}
\end{Lemma}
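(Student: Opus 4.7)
The plan is to exploit the splitting $\Ad\bar{\rho} = \Ad^0\bar{\rho} \oplus \F_q \cdot \operatorname{Id}$ (valid since $p > 2$), which induces a decomposition of every cohomology group $H^1(\op{G}_v, \Ad\bar{\rho})$ and is self-dual under local Tate duality: the annihilator inside $H^1(\op{G}_v, \Ad\bar{\rho}^*)$ of a subspace $V \subseteq H^1(\op{G}_v, \Ad^0\bar{\rho})$ equals $V^{\perp} \oplus H^1(\op{G}_v, \F_q(1))$. With this setup, part $(1)$ reduces to computing a ``scalar-part'' Selmer group of $\F_q$, and part $(2)$ will follow formally from part $(1)$ via Poitou--Tate.

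At each $v \in S \cup X$ I would first verify two compatibility statements: (a) $\mathcal{N}_v \subseteq \tilde{\mathcal{N}}_v$ via $\Ad^0\bar{\rho} \hookrightarrow \Ad\bar{\rho}$, and (b) $\tilde{\mathcal{N}}_v \cap H^1(\op{G}_v, \Ad^0\bar{\rho}) = \mathcal{N}_v$. For $v \in (S \setminus \{p\}) \cup X$ both are immediate from the direct-sum description $\tilde{\mathcal{N}}_v = \mathcal{N}_v \oplus H^1_{\operatorname{nr}}(\op{G}_v, \F_q)$ of Fact \ref{tildeNdef} and the analogous definition at trivial primes. The substantive case is $v = p$: I would use $\tilde{U} = U \oplus \F_q \cdot \operatorname{Id}$, the description $\tilde{\mathcal{N}}_p = \ker\{H^1(\op{G}_p, \tilde{U}) \to H^1(I_p, \tilde{U}/W)\}$, and the facts that $\tilde{U}/W$ and $U/U^0$ are both trivial one-dimensional $\op{G}_p$-modules. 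The map $U \to \tilde{U}/W$ sends $\left(\begin{smallmatrix}a&b\\0&-a\end{smallmatrix}\right)$ to $-a$, which coincides up to sign with the map $U \to U/U^0$ used to cut out $\mathcal{N}_p$; hence the intersection in (b) reproduces the ordinary tangent space $\mathcal{N}_p$.

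Granted (a) and (b), projection onto the scalar summand yields
\[\beta : H^1_{\tilde{\mathcal{N}}}(\op{G}_{\Q, S\cup X}, \Ad\bar{\rho}) \longrightarrow H^1_{\mathcal{L}}(\op{G}_{\Q, S\cup X}, \F_q), \quad \mathcal{L}_v := \beta_v(\tilde{\mathcal{N}}_v),\]
whose kernel equals $H^1_{\mathcal{N}}(\op{G}_{\Q, S\cup X}, \Ad^0\bar{\rho}) = 0$ by hypothesis. At each $v \in (S \setminus \{p\}) \cup X$ the condition $\mathcal{L}_v$ is the unramified one $H^1_{\operatorname{nr}}(\op{G}_v, \F_q)$, and at $v = p$ the counts $\dim \tilde{\mathcal{N}}_p - \dim \mathcal{N}_p = 2 = \dim H^1(\op{G}_p, \F_q)$ force $\mathcal{L}_p = H^1(\op{G}_p, \F_q)$. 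Therefore $H^1_{\mathcal{L}}(\op{G}_{\Q, S\cup X}, \F_q)$ classifies continuous characters of $\op{G}_{\Q, S\cup X}$ unramified outside $p$; by class field theory the maximal pro-$p$ abelian extension of $\Q$ unramified outside $p$ is the cyclotomic $\Z_p$-extension, so this group is one-dimensional over $\F_q$. Combined with Corollary \ref{SelmerdualSelmer1} ($h^1_{\tilde{\mathcal{N}}} - h^1_{\tilde{\mathcal{N}}^{\perp}} = 1$), the upper bound $h^1_{\tilde{\mathcal{N}}} \leq 1$ forces $h^1_{\tilde{\mathcal{N}}}(\Ad\bar{\rho}) = 1$ and $h^1_{\tilde{\mathcal{N}}^{\perp}}(\Ad\bar{\rho}^*) = 0$.

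Part $(2)$ is then a formal consequence: Poitou--Tate duality gives $\Sh^2_{S\cup X}(\Ad\bar{\rho}) \cong \Sh^1_{S\cup X}(\Ad\bar{\rho}^*)^{\vee}$, and since $\{0\} \subseteq \tilde{\mathcal{N}}_v^{\perp}$ at every $v$ one has $\Sh^1_{S\cup X}(\Ad\bar{\rho}^*) \subseteq H^1_{\tilde{\mathcal{N}}^{\perp}}(\op{G}_{\Q, S\cup X}, \Ad\bar{\rho}^*) = 0$, whence the vanishing of $\Sh^2$. The main obstacle will be claim (b) at $v = p$: numerically the two sides match, but because the direct-sum decomposition $\tilde{\mathcal{N}}_p = \mathcal{N}_p \oplus H^1_{\operatorname{nr}}(\op{G}_p, \F_q \cdot \operatorname{Id})$ fails at $p$, one must carefully trace through the intersection using the modules $U, U^0, \tilde{U}, W$ and potentially handle the sub-cases of Proposition \ref{KhareRamProp} separately.
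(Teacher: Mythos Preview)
Your argument is correct, but it is genuinely dual to the paper's. The paper attacks the \emph{dual} Selmer group directly: given $f\in H^1_{\tilde{\mathcal N}^\perp}(\G_{\Q,S\cup X},\Ad\bar\rho^*)$, it splits $f=f_1+f_2$ along $\Ad\bar\rho^*=(\F_q\cdot\operatorname{Id})^*\oplus(\Ad^0\bar\rho)^*$, uses only the inclusion $\mathcal N_v\subseteq\tilde{\mathcal N}_v$ to force $f_2\in H^1_{\mathcal N^\perp}(\Ad^0\bar\rho^*)=0$, and then, using $H^1_{\operatorname{nr}}(\G_v,\F_q)\subseteq\tilde{\mathcal N}_v$ at every $v$, observes that $f_1$ is everywhere unramified in $H^1(\G_{\Q},\F_q(\bar\chi))$ and vanishes by the triviality of the $\bar\chi$-isotypic piece of $\operatorname{Cl}(\Q(\mu_p))\otimes\F_q$. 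You instead bound the Selmer group itself: project to the scalar summand, kill the kernel via $H^1_{\mathcal N}(\Ad^0\bar\rho)=0$, and identify the target with $\Hom(\Gal(\Q^{\operatorname{cyc}}/\Q),\F_q)\simeq\F_q$ by elementary class field theory for $\Q$. Both then invoke Corollary~\ref{SelmerdualSelmer1}, and part~(2) is identical in the two approaches.

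What each buys: the paper's route is lighter on local bookkeeping, since it never needs your claim~(b) at $p$ (only $\mathcal N_v\subseteq\tilde{\mathcal N}_v$ and the containment of unramified scalar classes), but its arithmetic input (the $\bar\chi$-eigenspace of the cyclotomic class group) is less elementary than yours. Regarding your flagged obstacle, claim~(b) at $p$ does go through without case analysis: since $U\cap W=U^0$ and the composite $U\hookrightarrow\tilde U\twoheadrightarrow\tilde U/W$ is the isomorphism $U/U^0\xrightarrow{\sim}\tilde U/W$ (both trivial one-dimensional), one gets
\[
\tilde{\mathcal N}_p\cap H^1(\G_p,\Ad^0\bar\rho)=\tilde{\mathcal N}_p\cap H^1(\G_p,U)=\ker\bigl(H^1(\G_p,U)\to H^1(I_p,U/U^0)\bigr)=\mathcal N_p,
\]
which is exactly the fixed-determinant ordinary tangent space. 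The dimension count $\dim\tilde{\mathcal N}_p-\dim\mathcal N_p=2=\dim H^1(\G_p,\F_q)$ then gives $\mathcal L_p=H^1(\G_p,\F_q)$ as you claim.
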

\begin{proof} We show that it follows that $h^1_{\tilde{\mathcal{N}}^{\perp}}(\G_{\Q,S\cup X}, \Ad\bar{\rho}^*)=0$. From this, it will follow from Corollary $\ref{SelmerdualSelmer1}$ that $h^1_{\tilde{\mathcal{N}}}(\G_{\Q,S\cup X}, \Ad\bar{\rho})=1$. Recall that $\Nv=H^1_{\operatorname{nr}}(G_v,\F_q)\oplus \mathcal{N}_v$ at all primes $v\in S\cup X\backslash\{p\}$ and $\tilde{\mathcal{N}}_p\supsetneq H^1_{\operatorname{nr}}(\G_p,\F_q)\oplus \mathcal{N}_p$. A class $f\in H^1_{\tilde{\mathcal{N}}^{\perp}}(\G_{\Q,S\cup X}, \Ad\bar{\rho}^*)$ is represented a sum of $f=f_1+f_2$ where 
\[f_1\in H^1(\G_{\Q,S\cup X}, (\F_q\cdot \operatorname{Id})^*)\text{ and 
}f_2\in H^1(\G_{\Q,S\cup X}, \Ad^0\bar{\rho}^*).\] We show that $f_2\in  H^1_{\mathcal{N}^{\perp}}(\G_{\Q,S\cup X}, \Ad^0\bar{\rho}^*)$. At each prime $v\in S\cup X$, the restriction of the class $f$ to $\G_v$ is perpendicular to $\mathcal{N}_v\subset \tilde{\mathcal{N}}_v$. Since $f_1$ takes values in $(\F_q\cdot \operatorname{Id})^*$ and $\mathcal{N}_v\subset \Ad^0\bar{\rho}$, $f_1$ on restriction to $\G_v$ is perpendicular to $\mathcal{N}_v$. Consequently, $f_2=f-f_1$ lies in $\mathcal{N}_v^{\perp}$ on restriction to $\G_v$. We deduce that $f_2$ lies in the dual Selmer group $H^1_{\mathcal{N}^{\perp}}(\G_{\Q,S\cup X}, \Ad^0\bar{\rho}^*)$ are therefore $f_2=0$.
\par Therefore, $f=f_1$. Since $\tilde{\mathcal{N}}_v$ contains the unramified classes $H^1_{\operatorname{nr}}(\G_v, \F_q\cdot \operatorname{Id})$, similar reasoning shows that $f_1$ is unramified everywhere. Let $\operatorname{Cl}(\Q(\mu_p))$ denote the class group of $\Q(\mu_p)$ with induced $\Gal(\Q(\mu_p)/\Q)$ action. It is a well known result (see \cite[Proposition 6.16]{washington}) that the $\bar{\chi}$ isotypic component of $(\operatorname{Cl}(\Q(\mu_p))\otimes \F_q)$ is equal to zero. An application of inflation-restriction shows that
\[H^1(\G_{\Q}, \F_q^*)\simeq H^1(\G_{\Q}, \F_q(\bar{\chi}))^{\Gal(\Q(\mu_p)/\Q)}\simeq \Hom(\G_{\Q(\mu_p)}, \F_q(\bar{\chi}))^{Gal(\Q(\mu_p)/\Q)}.\] We conclude that the unramified class $f_1=0$ and as a consequence, $f=0$.
\par Since $h^1_{\tilde{\mathcal{N}}^{\perp}}(\op{G}_{\Q,S\cup X}, \Ad\bar{\rho}^*)=0$ we deduce in particular that \[\Sh^1_{S\cup X}(\Ad\bar{\rho}^*):=\op{ker}\left( H^1(\op{G}_{\Q,S\cup X}, \Ad \bar{\rho}^*)\rightarrow \bigoplus_{v\in S\cup X} H^1(\op{G}_v, \Ad \bar{\rho}^*)\right)\] is equal to zero. By Global duality, $\Sh^2_{S\cup X}(\Ad\bar{\rho})\simeq \Sh^1_{S\cup X}(\Ad\bar{\rho}^*)^{\vee}=0$.
\end{proof}
For $R\in \Cfh$, let $\gamma_R:\mathcal{O}/p^3\rightarrow R/\mathfrak{n}_3$ and $\beta_R:\mathcal{O}/p\hookrightarrow R/\mathfrak{n}_1$ be induced from the structure map $\mathcal{O}\rightarrow R$. Before commencing with the proof of Theorem $\ref{main}$, let us briefly outline the strategy. Let $\mathcal{R}:=\mathcal{O}[[U]]\in \mathfrak{C}$ and $\mathfrak{m}=(p,U)$ its maximal ideal. The first step of the proof involves producing an appropriately chosen deformation
\begin{equation*}
\begin{tikzcd}[column sep= large]
 & \text{GL}_2(\mathcal{R}) \arrow[d]\\
\G_{\Q,S\cup X} \arrow[ru,"\tilde{\varrho}"] \arrow[r, "\bar{\rho}"] & \text{GL}_2(\mathcal{R}/\mathfrak{m}).
\end{tikzcd}
\end{equation*}
The second step involves showing that for $R\in \mathfrak{C}$, the induced map
\[\tilde{\varrho}^*:\Hom(\mathcal{O}[[U]],R)\rightarrow \D(R)\] is surjective.
\begin{proof}(of Theorem $\ref{main}$)
\par By Lemma $\ref{lemmadualSelmervanishing}$, we have that $h^1_{\tilde{\mathcal{N}}^{\perp},S\cup X}=0$ and therefore, from the Poitou-Tate long exact sequence, we get the short the short exact sequence
\begin{equation}\label{sesfinalproof}0\rightarrow H^1_{\tilde{\mathcal{N}}}(\G_{\Q,S\cup X},\Ad\bar{\varrho})\rightarrow H^1(\G_{\Q,S\cup X},\Ad\bar{\varrho})\rightarrow \bigoplus_{v\in S\cup X} \frac{H^1(\op{G}_v, \Ad\bar{\varrho})}{\Nv}\rightarrow 0.\end{equation} 
Pushing forward $\rho_3$ by the map $\gamma_{\mathcal{R}}$, we have the representation $\rho_{3,R}:\G_{\Q,S\cup X}\rightarrow \text{GL}_2(R/\mathfrak{n}_3)$. By Lemma $\ref{lemmadualSelmervanishing}$, the Selmer group $H^1_{\tilde{\mathcal{N}}}(\G_{\Q,S\cup X}, \Ad\bar{\varrho})$ is one-dimensional. Pick a generating element $g\in H^1_{\tilde{\mathcal{N}}}(\G_{\Q,S\cup X}, \Ad\bar{\varrho})$. As a vector space of $\mathcal{R}/\mathfrak{m}=\F_q$, we have that $\mathfrak{n}_2/\mathfrak{n}_3=\F_q [p^2]\oplus  \F_q [pU]$ (the notation is explained in Remark $\ref{Remarkonnk}$). Set
$\alpha:=g\otimes [pU]$ and set $\varrho_3=\exp(\alpha)\rho_3$. Since $\rho_3$ satisfies $\Phi$, so does $\varrho_3$.
Since $\Sh^2_{S\cup X}(\Ad^0\bar{\rho})$ and there are no local obstructions to lifting $\varrho_3$ to a mod $p^4$ representation, it follows that $\varrho_3$ lifts to
\[\rho_4:\G_{\Q,S\cup X}\rightarrow \GL_2(\mathcal{R}/\mathfrak{n}_4).\] From the surjectivity of the restriction map on the right of the the short exact sequence $\ref{sesfinalproof}$, it follows that there exists a cohomology class \[\beta^{(4)}\in H^1(\G_{\Q,S\cup X}, \Ad\bar{\rho})\otimes \mathfrak{n}_4/\mathfrak{n}_3\] such that the twist $\varrho_4:=\exp(\beta^{(4)})\rho_4$ satisfies $\Phi$. Repeating the same argument, it follows that since $\varrho_4$ satisfies $\Phi$, hence lifts to a representation $\varrho_5$ which also satisfies the local conditions $\Phi$.
In this fashion, a compatible system of deformations $\{\varrho_k\}_{k\geq 2}$ is constructed and the passage to the inverse limit of which yields a deformation \[\tilde{\varrho}:\G_{\Q,S\cup X}\rightarrow \text{GL}_2(\mathcal{R})\] satisfying $\Phi$, and which equals $\rho_{2,R}$ modulo $\mathfrak{n}_2(\mathcal{R})$. It is only at the mod $p^3$ level that the deformation is modified by $\alpha$.
\par Let $\sigma\in \D(\mathcal{S})$ and let $\mathfrak{m}_{\mathcal{S}}$ denote the maximal ideal of $\mathcal{S}$. Via a deformation theoretic argument, we construct a map $\zeta:\mathcal{R}\rightarrow \mathcal{S}$ of coefficient rings such that \[\sigma=\tilde{\varrho}^*(\zeta):\op{G}_{\Q}\xrightarrow{\tilde{\varrho}} \GL_2(\mathcal{R})\rightarrow \GL_2(\mathcal{S}).\] In the above, the latter map $\GL_2(\mathcal{R})\rightarrow \GL_2(\mathcal{S})$ is induced by $\zeta$. Set $\sigma_k:=\sigma\mod{\mathfrak{n}_k(\mathcal{S})}$. We let $\zeta_2: \mathcal{R}\rightarrow \mathcal{S}/\mathfrak{n}_2(S)$ be the unique $\mathcal{O}$-algebra map sending $U$ to $0$. Since $\tilde{\varrho}$ mod $\mathfrak{n}_2(\mathcal{R})$ is $\rho_{2,R}$, it follows that $\tilde{\varrho}^*(\zeta_2)$ is $\rho_{2,\mathcal{S}}$. As a result, $\sigma_2=\tilde{\varrho}^*(\zeta_2)$. We construct a compatible system of lifts $\{\zeta_k\}_{k\geq 2}$ such that for all $k$, we have that $\sigma_k=\tilde{\varrho}^*(\zeta_k)$. By the continuity of the operator $\tilde{\varrho}^*$, it shall follow that on passing to inverse limits, $\sigma=\tilde{\varrho}^*(\zeta)$, where $\zeta:=\varprojlim_k \zeta_k$. The map $\zeta$ constructed in this process need not be unique and the deformation $\varrho$ is not universal deformation representing $\D$. But this will show that the map $\eqref{surjective}$ is surjective. Further, we show that $\zeta$ is uniquely determined when $\mathcal{S}$ contains no $p$-torsion.
\par We proceed by induction on $k$. Suppose that for $k\geq 3$, $\zeta_{k-1}:\mathcal{R}\rightarrow \mathcal{S}/\mathfrak{n}_k(\mathcal{S})$ is constructed so that $\sigma_{k-1}=\tilde{\varrho}^*(\zeta_{k-1})$. We show that one may lift $\zeta_{k-1}$ to $\zeta_k$
\begin{equation}\label{noncommutative}
\begin{tikzcd}[column sep= large]
& \mathcal{S}/\mathfrak{n}_{k}(\mathcal{S})\arrow[d]\\
\mathcal{R} \arrow[ru,"\zeta_{k}"] \arrow[r, "\zeta_{k-1}"] & \mathcal{S}/\mathfrak{n}_{k-1}(\mathcal{S})
\end{tikzcd}
\end{equation}
so that $\sigma_k=\tilde{\varrho}^*(\zeta_k)$.
\par Being a formal power ring, $\mathcal{R}$ is formally smooth, and consequently, $\zeta_{k-1}$ lifts to $g_k:\mathcal{R}\rightarrow \mathcal{S}/\mathfrak{n}_k(\mathcal{S})$. Set
$\mu_k:=\tilde{\varrho}^*(g_k)$, both $\sigma_k$ and $\mu_k$ are deformations of $\sigma_{k-1}$ in $\D(\mathcal{S}/\mathfrak{n}_k(\mathcal{S}))$. There is a class \[\gamma\in H^1_{\mathcal{N}}(\G_{\Q,Z}, \Ad\bar{\rho})\otimes \mathfrak{n}_{k-1}(\mathcal{S})/\mathfrak{n}_k(\mathcal{S})\]
such that $\sigma_k=\exp(\gamma)\mu_k=(\operatorname{Id}+\gamma)\mu_k$. This class is simply $\sigma_k \mu_k^{-1}-\op{Id}$. Set $G:=g_k(U)$ and let $H\in \mathfrak{m}_{\mathcal{S}}$ be such that $\gamma$ may be represented as $\gamma= g\otimes [pH]$ with $[pH]\in \mathfrak{n}_{k-1}(\mathcal{S})/\mathfrak{n}_{k}(\mathcal{S})$. Note that the choice of $H$ need not be unique when $\mathcal{S}$ has nontrivial $p$-torsion. Note that $pH\in \mathfrak{m}_{\mathcal{S}}^2$ since $k-1\geq 2$ and $p\notin \mathfrak{m}_{\mathcal{S}}^2$ since $\mathcal{S}\in \Cfh$. Therefore $H$ is not a unit in the local ring $\mathcal{S}$  and thus in the maximal ideal $\mathfrak{m}_{\mathcal{S}}$.
\par Let $\zeta_k:\mathcal{R}\rightarrow \mathcal{S}/\mathfrak{n}_k(\mathcal{S})$ be the $\mathcal{O}$-algebra map which takes
\[U\mapsto G+H.\]
\par We will now show that the effect of replacing $g_k$ by $\zeta_k$ is that $\mu_k=g_k^*(\tilde{\varrho})$ gets replaced by $\sigma_k=\exp(\gamma)\mu_k$. This will conclude the proof.
\par First we observe that $\zeta_k(x)=g_k(x)$ for $x\in \mathfrak{n}_3(\mathcal{R})$.
The ideal $\mathfrak{n}_3(\mathcal{R})$ is generated by monomials $p^3, p^2U, pU^2$. Since $pH\in \mathfrak{n}_{k-1}(\mathcal{S})$, we have that $pHz\in \mathfrak{n}_k(\mathcal{S})$ for any $z\in \mathfrak{m}_{\mathcal{S}}$. Consequently, an application of the binomial theorem yields that for a monomial $p^aU^{b}$ for which $a+b=3$ and $a\geq 1$,
\[\zeta_k(p^aU^{b})=p^a(G+H)^{b}
=p^aG^{b}
=g_k(p^aU^{b}).\]
Hence $\zeta_k(x)=g_k(x)$ for $x\in \mathfrak{n}_3(\mathcal{R})$.
\par The maps $\zeta_k$ and $g_k$ induce maps $\op{M}_2(\zeta_k)$ and $\op{M}_2(g_k)$ on the $2\times 2$ matrices $\op{M}_2(\mathcal{R})\rightarrow \op{M}_2(\mathcal{S}/\mathfrak{n}_k(\mathcal{S}))$. Recall from the construction of $\tilde{\varrho}$ that in matrix notation
\begin{equation}\label{varrhomod3R}\tilde{\varrho}=\op{exp}(\alpha)\rho_3=(\operatorname{Id}+gpU)\rho_3=\rho_3+gpU \bar{\rho}  \mod{\mathfrak{n}_3(\mathcal{R}).}\end{equation}
From the relation, $\ref{varrhomod3R}$ we deduce that as a function to $2\times 2$ matrices $\op{M}_2(\mathcal{S}/\mathfrak{n}_k(\mathcal{S}))$
\[\tilde{\varrho}^*(\zeta_k)-\mu_k:\G_{\Q,S\cup X}\rightarrow \op{M}_2(\mathcal{S}/\mathfrak{n}_k(\mathcal{S}))\] evaluates to
\[\begin{split}\tilde{\varrho}^*(\zeta_k)-\mu_k&=\tilde{\varrho}^*(\zeta_k)-\tilde{\varrho}^*(g_k)\\
&=\op{M}_2(\zeta_k)\circ\left(\rho_3+g \bar{\rho} pU+\mathfrak{n}_3(\mathcal{R})\right)-\op{M}_2(g_k)\circ \left(\rho_3+ g \bar{\rho} pU+\mathfrak{n}_3(\mathcal{R})\right). 
\end{split}
\]Since $\phi_k(x)=g_k(x)$ for $x\in \mathfrak{n}_3(\mathcal{R})$, the above may be represented as
\[\begin{split}
&=\left(\rho_3+ g \bar{\rho} p\zeta_k(U)\right)- \left(\rho_3+g \bar{\rho} pg_k(U)\right)\\
&=gpH\bar{\rho}=\gamma \bar{\rho}.
\end{split}\]
Consequently
\[\tilde{\varrho}^*(\zeta_k)=\mu_k+\gamma\bar{\rho}=\mu_k+\gamma\mu_k=(\operatorname{Id}+\gamma)\mu_k=\sigma_k.\] This completes the induction step. Therefore,  the map $\eqref{surjective}$ is surjective. We observe that when $\mathcal{S}$ has no $p$-torsion the element $H$ is uniquely determined. Therefore, it also follows by induction that there is a unique $\zeta$ such that $\tilde{\varrho}^*(\zeta)=\sigma$. This completes the proof of the theorem.\end{proof}

\begin{proof}(of Theorem \ref{aux}) Let $R\in \mathfrak{C}$ and $\mathfrak{f}\in \Hom_{\mathfrak{C}}(\mathcal{O}[[U]], R)$ with associated deformation $\rho_{\mathfrak{f}}\in \D(R)$. We take note that the weight of $\rho_{\mathfrak{f}}$ is defined as the composite \[\mathscr{W}t^*\rho_{\mathfrak{f}}:\Lambda\xrightarrow{\mathscr{W}t}\mathcal{O}[[U]]\xrightarrow{\mathfrak{f}}R\] and that $\mathscr{W}t(x)=\det \tilde{\varrho}-1$. Let $\Omega(R)\subseteq \Hom_{\mathfrak{C}}(\Lambda,R)$ be the subset of weights which are congruent to the weight of the prescribed deformation \[\Omega(R):=\{\lambda\in\Hom_{\mathfrak{C}}(\Lambda,R)\mid \lambda\equiv \mathscr{W}t^*\rho_2\mod(pR\cap \mathfrak{m}_R^2)\}.\]
We observe that since $\mathscr{W}t\in \Omega(\mathcal{O}[[U]])$, it follows that $\mathscr{W}t^*\rho_{\mathfrak{f}}\in \Omega(R)$. Let $\lambda\in \Omega(R)$, we show that there exists a deformation $\varrho^{\lambda}\in \D(R)$ with weight $\mathscr{W}t^*\varrho^{\lambda}=\lambda$. We refer to the proof of Theorem $\ref{main}$ to the choice of the set of primes $X$, these are chosen so that the dual Selmer group of the fixed weight Selmer conditions is zero. We fix the weight $\lambda$ and examine if $\rho_2$ has a deformation to $\D(R)$. Standard techniques in deformation theory discussed in this manuscript (cf. \cite{hamblenramakrishna}) imply that a lift does indeed exist provided the (fixed weight) dual Selmer group satisfies
\[H^1_{\mathcal{N}^{\perp}}(\G_{\Q,S\cup X}, \Ad^0\bar{\rho}^*)=0.\]
This concludes the proof of the Theorem.
\end{proof}

\end{document}